\newcommand{\Z}{\mathbb{Z}}
\newcommand{\R}{\mathbb{R}}
\newcommand{\Q}{\mathbb{Q}}
\newcommand{\F}{\mathcal F}
\let\temp\phi
\let\phi\varphi
\let\varphi\temp
\DeclareMathOperator{\interior}{int}
\DeclareMathOperator{\mmod}{mod}
\newtheorem{thm}{Theorem}[section]
\newtheorem{mainthm}{Theorem}
\newtheorem{lem}[thm]{Lemma}
\newtheorem{prop}[thm]{Proposition}
\newtheorem{conj}[thm]{Conjecture}
\theoremstyle{definition}
\newtheorem{constr}[thm]{Construction}
\newtheorem{rmk}[thm]{Remark}
\newtheorem{ques}[thm]{Question}
\newtheorem{fact}[thm]{Fact}
\title{Pseudo-Anosov representatives of stable Hamiltonian structures}
\author{Jonathan Zung\footnote{jzung@mit.edu}}
\date{}
\begin{document}
\maketitle

\begin{abstract}
A pseudo-Anosov homeomorphism of a surface is a canonical representative of its mapping class. In this paper, we explain that a transitive pseudo-Anosov flow is similarly a canonical representative of its stable Hamiltonian class. It follows that there are finitely many pseudo-Anosov flows admitting positive Birkhoff sections on any given rational homology 3-sphere. This result has a purely topological consequence: any 3-manifold can be obtained in at most finitely many ways as $p/q$ surgery on a fibered hyperbolic knot in $S^3$ for a slope $p/q$ satisfying $q\geq 6$, $p\neq 0, \pm 1, \pm 2 \mmod q$. The proof of the main theorem generalizes an argument of Barthelm\'e--Bowden--Mann.

\end{abstract}

\begin{section}{Introduction}
Thurston showed that a pseudo-Anosov homeomorphism of a surface is a canonical representative of its mapping class:
\begin{thm}[\cite{thurston.GeometryDynamicsDiffeomorphisms}]\label{thm:thurston1}
    Suppose $\phi_1$ and $\phi_2$ are pseudo-Anosov homeomorphisms of a closed surface in the same mapping class. Then $\phi_1$ is conjugate to $\phi_2$ by a homeomorphism isotopic to the identity.
\end{thm}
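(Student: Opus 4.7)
The plan is to exploit the rigidity of invariant measured foliations under the mapping class group action, and then to argue that two pseudo-Anosov maps with the same invariant measured foliations and the same dilatation must differ by an isometry of the induced singular flat structure, which in genus at least two must be trivial.

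First, each $\phi_i$ carries a pair of transverse measured foliations $(\mathcal{F}^s_i, \mu^s_i)$ and $(\mathcal{F}^u_i, \mu^u_i)$ together with a dilatation $\lambda_i > 1$, where $\phi_i$ stretches $\mu^u_i$ by $\lambda_i$ and contracts $\mu^s_i$ by $\lambda_i^{-1}$. The projective classes of $\mathcal{F}^{s}_i$ and $\mathcal{F}^{u}_i$ are respectively the repelling and attracting fixed points of the action of the mapping class $[\phi_i]$ on Thurston's space $\mathcal{PMF}(S)$, so they depend only on the isotopy class. Since $\phi_1$ and $\phi_2$ are isotopic, the measured foliations agree up to isotopy and rescaling, and the dilatations agree as well (for instance, $\log \lambda_i$ is the topological entropy, an invariant of the mapping class).

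Second, I would produce a homeomorphism $h_0$ isotopic to the identity which simultaneously carries $(\mathcal{F}^s_1, \mu^s_1)$ to $(\mathcal{F}^s_2, \mu^s_2)$ and $(\mathcal{F}^u_1, \mu^u_1)$ to $(\mathcal{F}^u_2, \mu^u_2)$. The cleanest route is through Teichm\"uller theory: a transverse pair of measured foliations is equivalent data to a holomorphic quadratic differential on an appropriate complex structure on $S$, and moving continuously through quadratic differentials produces the required ambient isotopy. Setting $\psi := h_0 \phi_1 h_0^{-1}$, both $\psi$ and $\phi_2$ are isotopic pseudo-Anosov homeomorphisms preserving the same measured foliations with the same transverse measures and the same dilatation.

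Finally, the pair of measured foliations equips $S$ with a singular flat half-translation structure in which both $\psi$ and $\phi_2$ act by affine maps with identical linear part $\mathrm{diag}(\lambda, \lambda^{-1})$. Thus $\phi_2 \psi^{-1}$ is an orientation-preserving isometry of this singular flat structure. Because $\chi(S) < 0$, the isometry group of such a structure is finite, and a finite-order homeomorphism isotopic to the identity is the identity (Nielsen realization on a negatively curved surface). Hence $\phi_2 = h_0 \phi_1 h_0^{-1}$ with $h_0$ isotopic to the identity.

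The main obstacle I anticipate is the second step: extracting a genuine ambient isotopy of $S$ that straightens both measured foliations simultaneously while preserving their transverse measures. The invariance of the projective classes under $[\phi_i]$ gives only isotopy of each foliation individually up to Whitehead equivalence, and one must pass through quadratic differentials (or equivalently through the uniqueness of jointly measured geodesic laminations) to obtain a joint straightening. Once this is done, the affine rigidity argument of the third paragraph is essentially mechanical.
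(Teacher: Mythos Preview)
The paper does not give its own proof of this statement: \cref{thm:thurston1} is quoted as a classical result of Thurston, cited to \cite{thurston.GeometryDynamicsDiffeomorphisms}, and serves only as motivation for the paper's analogue (\cref{thm:main}) in the setting of pseudo-Anosov flows. There is therefore nothing in the paper to compare your argument against.

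That said, your outline is the standard route to Thurston's uniqueness theorem and is sound. Step~1 (invariance of the projective foliations and dilatation under isotopy) and Step~3 (affine rigidity on the induced singular flat structure, plus the fact that a finite-order homeomorphism isotopic to the identity on a surface of negative Euler characteristic is the identity) are correct as stated. You have also correctly located the one genuine subtlety in Step~2: upgrading the projective equivalence of each foliation separately to a single ambient homeomorphism, isotopic to the identity, that matches both measured foliations simultaneously as honest measured foliations (not merely up to Whitehead equivalence). The usual way to close this gap is exactly the one you gesture at---pass through the quadratic differential/Teichm\"uller description, where the pair of transverse measured foliations determines a unique point in the bundle of quadratic differentials over Teichm\"uller space, and the marking supplies the required isotopy. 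Once that is made precise, the remainder of your argument goes through without change.
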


In this paper, we provide an analogue of this statement for pseudo-Anosov flows on 3-manifolds. First, we will explain that a suitable replacement for ``mapping class'' is the notion of ``stable Hamiltonian structure up to stable homotopy''. The reader might not be familiar with stable Hamiltonian structures, so we will introduce them in section 2. For now, it suffices to know that stable Hamiltonian structure is a simultaneous generalization of the notion of the suspension flow of an area preserving diffeomorphism of a surface and the notion of the Reeb flow of a positive or negative contact structure. Marty proved that a special class of Anosov flows, the skew $\R$-covered Anosov flows, are Reeb flows of contact structures \cite{marty.SkewedAnosovFlows}. The following construction extends this symplectic interpretation to other pseudo-Anosov flows:
\begin{constr}\label{constr:main}
    Given a transitive pseudo-Anosov flow $\phi$ on a closed, oriented 3-manifold, one may blow up finitely many orbits to obtain the Reeb flow of a stable Hamiltonian structure $(\omega_\phi,\lambda_\phi)$. Different choices of blowup give rise to stable Hamiltonian structures which are possibly-non-exact stable homotopic.
\end{constr}
\noindent We sometimes abuse terminology and say that $\phi$ is in the stable Hamiltonian class of $(\omega_\phi, \lambda_\phi)$.
\begin{rmk}
The blowups are indeed necessary, even in the Anosov case. A stable Hamiltonian Reeb flow generally has invariant tori separating the positive and negative contact regions, but pseudo-Anosov flows never have invariant tori. On the other hand, the blowups substantially simplify the construction of a smooth invariant volume form compared to \cite{marty.SkewedAnosovFlows} and \cite{asaoka.InvariantVolumesCodimensionone}.
\end{rmk}
\begin{rmk}
There are several competing notions of equivalence between stable Hamiltonian structures. The two we will use are exact stable homotopy and possibly-non-exact stable homotopy. \Cref{constr:main} can be stated in terms of exact stable homotopy, see \cref{constr:main2}. On a first reading, the reader may restrict to the case of a rational homology 3-sphere where the distinction between the two equivalence relations is immaterial.
\end{rmk}

Now we can state the promised analogue of \cref{thm:thurston1}. Note that all results from here on are contingent on the foundations of symplectic field theory.
\begin{mainthm}\label{thm:main}
    Suppose $\phi_1$ and $\phi_2$ are transitive pseudo-Anosov flows on a hyperbolic 3-manifold. If $(\omega_{\phi_1},\lambda_{\phi_1})$ is exact stable homotopic to $(\omega_{\phi_2},\lambda_{\phi_2})$, then $\phi_1$ and $\phi_2$ are orbit equivalent via a homeomorphism isotopic to the identity. When the condition of hyperbolicity is dropped, there are at most finitely many pseudo-Anosov flows in the same stable Hamiltonian class.
\end{mainthm}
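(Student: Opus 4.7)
The strategy is to use symplectic field theory to extract from $(\omega_\phi,\lambda_\phi)$ the set of free homotopy classes of periodic orbits of $\phi$, and then to appeal to a Barthelm\'e--Bowden--Mann-type rigidity theorem. Concretely, I would first argue that a suitable piece of SFT --- for instance the set of conjugacy classes in $\pi_1(M)$ realized by closed Reeb orbits, graded by action --- is an invariant of exact stable homotopy for the SHSs produced by \Cref{constr:main}. Since the blow-up of a pseudo-Anosov flow introduces only Morse--Bott tori of orbits corresponding to the blown-up orbits themselves, the resulting Reeb orbit set essentially coincides with the periodic orbit set of $\phi$; after a bookkeeping step to identify and discard the spurious Morse--Bott families, one concludes that the marked periodic orbit spectrum of $\phi$ is determined by the exact stable homotopy class of $(\omega_\phi,\lambda_\phi)$.

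Given this invariance, the hyperbolic part of the theorem reduces to a rigidity statement: two transitive pseudo-Anosov flows on a closed hyperbolic 3-manifold with the same marked orbit spectrum are orbit equivalent via a homeomorphism isotopic to the identity. This is the pseudo-Anosov analogue of the Barthelm\'e--Bowden--Mann result, and the abstract signals that the core work of the paper lies in generalizing their argument to the pseudo-Anosov setting. For the finiteness statement, I would combine the same SFT-based invariance --- now only up to possibly-non-exact stable homotopy, which merely coarsens the action grading but still retains the underlying set of free homotopy classes --- with a finiteness result asserting that only finitely many pseudo-Anosov flows on a fixed closed 3-manifold can share a given marked orbit spectrum. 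This latter finiteness can be approached by JSJ decomposition, reducing to hyperbolic pieces (where uniqueness holds by the first part) and Seifert-fibered pieces (where pseudo-Anosov flows are well understood).

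The main obstacle I anticipate is the analytic setup of SFT in the presence of the Morse--Bott tori introduced by blow-ups: one must verify that the relevant invariants are well-defined and that they algebraically distinguish the genuine orbits of $\phi$ from the families created by blowing up. A related subtlety is checking independence of the choice of blow-up in \Cref{constr:main}, which amounts to exactly the exact stable homotopy statement asserted there. The second main obstacle is the generalization of the Barthelm\'e--Bowden--Mann rigidity argument from Anosov to pseudo-Anosov flows, where the orbit space is branching rather than a plane and the boundary action of $\pi_1(M)$ carries a genuinely singular structure; this is where most of the purely topological work of the paper should lie.
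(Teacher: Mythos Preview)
Your outline has the right high-level architecture --- extract the free homotopy classes of closed orbits via SFT, then invoke a reconstruction theorem --- but it misidentifies both where the work lies and what the key technical device is.

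First, the rigidity step you flag as ``the core work of the paper'' is in fact cited, not proved here: the reconstruction theorem (two pseudo-Anosov flows sharing the same set of represented conjugacy classes are orbit equivalent on a hyperbolic manifold, and determined up to finite ambiguity otherwise) is already established by Barthelm\'e--Frankel--Mann and Barthelm\'e--Fenley--Frankel--Mann. So the JSJ argument you sketch for the toroidal case is unnecessary; the finite ambiguity comes straight from the cited theorem.

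Second, and more seriously, your SFT step is missing the idea that makes it work. You propose to use an action-graded invariant and then ``discard the spurious Morse--Bott families''. The paper uses neither action nor a direct invariance statement. The actual mechanism is: for each primitive $g\in\pi_1(M)$, one arranges (using Fenley's lozenge theory and a careful choice of blowup parameters) that \emph{all} Reeb orbits of $(\omega_\phi,\lambda_\phi)$ representing $g$ carry the \emph{same} Lefschetz index. This is the content of \cref{lem:samegrading}, and it is genuinely delicate --- one must ensure, for instance, that the Morse--Bott tori in the eye contribute no primitive orbits in the relevant class. With this in hand, the SFT argument (\cref{lem:sameorbits}) is a chain-homotopy computation in rational SFT (Hutchings' $q$-variable model): one looks at broken genus-zero curves from $\gamma$ to $\gamma$ in the composed cobordisms and uses the uniform $\Z/2$ grading to kill the $K\partial+\partial K$ terms, forcing a Reeb orbit of the target SHS in the class $[g]$. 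Without the same-grading lemma this step fails, because cylindrical contact homology is not available for stable Hamiltonian structures and rational SFT has no free-homotopy-class grading to appeal to directly.
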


Barthelm\'e, Mann, and Bowden proved \cref{thm:main} in the setting of contact structures and skew $\R$-covered Anosov flows \cite{barthelme.mann.OrbitEquivalencesMathbb}. Their argument has two steps. First, they show that it is possible to reconstruct an Anosov flow up to orbit equivalence once one knows the set of free homotopy classes represented by its closed orbits. Second, they appeal to the invariance of cylindrical contact homology which shows that the algebraic count of closed orbits in any free homotopy class does not change during deformations of the contact form. 

The main observation of this paper is that, modulo some technical complications, one may make the following substitutions in their argument:
\begin{align*}
\text{contact structure} &\leadsto \text{stable Hamiltonian structure mod stable homotopy}\\
\text{skew $\R$-covered Anosov flow} &\leadsto \text{pseudo-Anosov flow}\\
\text{cylindrical contact homology} & \leadsto \text{rational symplectic field theory}
\end{align*}
The two issues we address in this paper are:
\begin{enumerate}
    \item We need to control the dynamics after the blowups in \cref{constr:main}. This is done using a cone field argument.
    \item Cylindrical contact homology is not well defined for stable Hamiltonian structures, so we must work instead with the more complicated rational symplectic field theory. Despite the absence of a grading by free homotopy classes, we can still extract enough information to make the argument go through.
\end{enumerate}
Finally, let us mention \cite{colin.honda.ReebVectorFields} which is similar in spirit to this paper and performs a computation of cylindrical contact homology in a pseudo-Anosov contact setting. In their language, this paper carries out their program to prove the Weinstein conjecture in the case of contact structures on rational homology 3-spheres with pseudo-Anosov open books having fractional Dehn twist coefficient $2/n$ or larger; see \cite[Corollary 2.4, Remark 2.5]{colin.honda.ReebVectorFields}.

\subsection{Finiteness conjecture for pseudo-Anosov flows}
One source of motivation for \cref{thm:main} is the following longstanding conjecture:
\begin{conj}\label{conj:anosov_finiteness}
    On any given closed, oriented 3-manifold, there are a finite number of transitive pseudo-Anosov flows up to orbit equivalence.
\end{conj}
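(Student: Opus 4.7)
The plan is to apply \cref{thm:main} to reduce the conjecture to a finiteness statement for stable Hamiltonian classes. Specifically, it suffices to show that on a fixed closed, oriented 3-manifold $M$, only finitely many possibly-non-exact stable homotopy classes of stable Hamiltonian structures arise as $(\omega_\phi,\lambda_\phi)$ from a transitive pseudo-Anosov flow $\phi$ on $M$; \cref{thm:main} then gives finiteness of orbit equivalence classes within each such class. In the rational homology 3-sphere case where the distinction between exact and possibly-non-exact stable homotopy is immaterial, this reduction is clean.

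First I would extract discrete invariants of $(\omega_\phi,\lambda_\phi)$: the de Rham class $[\omega_\phi] \in H^2(M;\R)$ normalized by the volume $\int_M \lambda_\phi \wedge \omega_\phi$, and the homotopy class of the cooriented $2$-plane field $\ker \lambda_\phi$. Both live in countable sets a priori, but one should expect much finer control coming from the pseudo-Anosov dynamics. The weak stable and weak unstable foliations of a pseudo-Anosov flow are taut and essential, and the Euler classes of their tangent plane fields satisfy Thurston norm and Gauss--Bonnet type inequalities bounding them in terms of the topology of $M$ and the number of singular orbits. The number of singular orbits in turn should be controlled either by the Euler characteristic of a Birkhoff section, when one exists, or more generally by the combinatorics of a branched surface carrying the foliations.

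Second I would upgrade these bounds on the underlying plane field to a bound on the full stable homotopy class, via a combinatorial model for the flow. Agol's veering triangulation (when applicable) or Mosher's dynamic pair decomposition encodes a pseudo-Anosov flow by finite combinatorial data, and one would hope to show that such data determines $(\omega_\phi,\lambda_\phi)$ up to stable homotopy in a finite-to-one fashion. Given this, the task reduces to bounding the complexity of the combinatorial model, for instance by bounding the number of tetrahedra in the veering triangulation in terms of the topology of $M$.

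The main obstacle is precisely this last complexity bound: a priori nothing prevents pseudo-Anosov flows on a fixed $M$ from having arbitrarily complicated veering triangulations or dynamic pairs, and indeed analogous questions (such as the existence of infinitely many tight contact structures on some rational homology 3-spheres) caution against naive finiteness. This is, I suspect, the reason \cref{conj:anosov_finiteness} remains open in full generality; the present paper circumvents it only after additional hypotheses --- such as the existence of a positive Birkhoff section on a rational homology 3-sphere --- force the stable Hamiltonian class into a finite range a priori.
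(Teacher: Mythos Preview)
The statement you are attempting is labeled a \emph{conjecture} in the paper and is explicitly described there as ``longstanding''; the paper offers no proof of it. What the paper does prove is the special case \cref{thm:contact_finiteness} (rational homology sphere plus positive Birkhoff section), precisely by the mechanism you identify in your final paragraph: the positive Birkhoff section forces the stable Hamiltonian class to be that of a tight contact structure with zero Giroux torsion, and then the Colin--Giroux--Honda finiteness theorem for such contact structures supplies the missing bound. So there is no ``paper's own proof'' to compare your proposal against.

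Your proposal is not a proof but a strategy with a self-diagnosed gap, and your diagnosis is accurate. The reduction via \cref{thm:main} to finiteness of stable Hamiltonian classes is sound (with the caveat you note about exact versus possibly-non-exact homotopy outside the rational homology sphere case). The substantive content of the conjecture is then exactly the complexity bound you flag as missing: controlling the number of stable Hamiltonian classes, or equivalently the combinatorial complexity of a veering triangulation or dynamic pair, purely in terms of the topology of $M$. Nothing in your outline supplies that bound, and the paper does not claim to either; it instead poses the corresponding question (whether there are finitely many hypertight stable Hamiltonian structures up to stable homotopy on an irreducible atoroidal rational homology sphere) as open. Your middle paragraphs about Euler class bounds and finite-to-one encoding by combinatorial models are plausible heuristics but would need substantial new arguments to become proofs; in particular, bounding the number of tetrahedra of a veering triangulation by the topology of the filled manifold is not known.
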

Barthelm\'{e}, Mann, and Bowden solved the finiteness problem for skew-$\R$-covered Anosov flows by reducing the question to the finiteness of tight contact structures with zero Giroux torsion \cite{barthelme.mann.OrbitEquivalencesMathbb}. This finiteness problem was previously resolved by Colin, Giroux, and Honda \cite{colin.giroux.ea.FinitudeHomotopiqueIsotopique}. In order to use the same idea in our setting, we need a criterion for deciding when our stable Hamiltonian structures are of contact type.

\begin{prop}\label{prop:tight}
    Suppose $\phi$ admits a positive Birkhoff section. Then $(\omega_\phi, \lambda_\phi)$ is possibly-non-exact stable homotopic to the Reeb flow of a contact structure. Moreover, the contact structure is tight and has zero Giroux torsion.
\end{prop}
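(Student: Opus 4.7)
The plan is to use the positive Birkhoff section $S$ to construct an explicit contact form $\alpha$ whose Reeb vector field recovers the blown-up flow of $\phi$, verify that $(d\alpha,\alpha)$ is stable homotopic to $(\omega_\phi,\lambda_\phi)$, and finally invoke known results about pseudo-Anosov open books with positive fractional Dehn twist coefficient to obtain tightness and vanishing Giroux torsion.

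\textbf{Building $\alpha$.} I would interpret $S$ (after incorporating the blow-ups from \cref{constr:main}) as an abstract fractional open book whose page is $S$, whose binding is $\partial S$, and whose monodromy is the first-return map of $\phi$. I would then adapt the Thurston--Winkelnkemper construction: on the mapping-torus region away from $\partial S$, take $\alpha = dt + \varepsilon\beta$, where $t$ is the fiber coordinate and $\beta$ is a primitive of a positive area form on $S$ chosen so that its pullback under the monodromy differs from $\beta$ by an exact form; near each binding component use a standard local model $f(r)d\theta + g(r)dz$. Positivity of $S$ as a Birkhoff section guarantees the contact condition $\alpha\wedge d\alpha > 0$, and with a little care the interpolation functions can be chosen so that $R_\alpha$ is everywhere a positive scalar multiple of the blown-up flow vector field $X_\phi$.

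\textbf{Stable homotopy and tightness.} Once $R_\alpha$ is proportional to $X_\phi$, the linear path $(\omega_t,\lambda_t) = ((1-t)\omega_\phi + t\,d\alpha,\ (1-t)\lambda_\phi + t\alpha)$ stays within the space of stable Hamiltonian structures: the kernel condition $\langle X_\phi\rangle \subset \ker\omega_t$, the positivity $\lambda_t(X_\phi) > 0$, and the compatibility $\ker\omega_t \subset \ker d\lambda_t$ are each convex in $t$. This gives the required possibly-non-exact stable homotopy. For the second half of the proposition, $\ker\alpha$ is Giroux-supported by a pseudo-Anosov open book whose fractional Dehn twist coefficient at each binding component is strictly positive --- the positivity coming directly from $S$ being a \emph{positive} Birkhoff section --- and tightness together with zero Giroux torsion can then be extracted from Honda--Kazez--Mati\'c and Colin--Honda \cite{colin.honda.ReebVectorFields}.

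\textbf{Main obstacle.} The hardest step is arranging $R_\alpha$ to be genuinely collinear with $X_\phi$, rather than merely transverse to the pages. One must match the Reeb dynamics to the pseudo-Anosov flow simultaneously on the interior of each page, on the blow-up tori replacing the pseudo-Anosov singular orbits (and any other blown-up orbits), and near every binding component of $S$. A secondary technical issue is that both $\omega_\phi$ and $d\alpha$ depend on auxiliary choices (blow-up data and the primitive $\beta$), so these choices may need to be harmonized before the convex interpolation in the second step will land inside the space of SHS throughout.
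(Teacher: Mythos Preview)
Your outline has a genuine gap at exactly the point you flag as the ``main obstacle'': producing a contact form $\alpha$ whose Reeb vector field is everywhere a positive multiple of the blown-up flow $X_\phi$. This is not a matter of bookkeeping. The paper itself remarks that ``the Reeb flow of $(\omega_\phi,\lambda_\phi)$ may not be the Reeb flow of a contact structure'', and your linear-interpolation argument collapses the moment $R_\alpha$ and $X_\phi$ fail to be parallel, since then $\ker\omega_t$ is no longer constant in $t$ and the stabilization condition $\ker\omega_t\subset\ker d\lambda_t$ need not survive the interpolation. The paper sidesteps this completely: rather than matching Reeb flows, it only arranges (by taking the blowup parameters small) that the Reeb flow of $(\omega_\phi,\lambda_\phi)$ stays positively transverse to the pages of the open book $\mathcal B$ coming from the positive Birkhoff section. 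Both $(\omega_\phi,\lambda_\phi)$ and the Thurston--Winkelnkemper pair $(d\alpha,\alpha)$ are then \emph{compatible} with $\mathcal B$, and Cieliebak--Volkov (\cref{prop:openbook}) delivers the possibly-non-exact stable homotopy for free. No collinearity of Reeb fields is ever needed.

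The tightness argument is also handled differently. Rather than fractional Dehn twist coefficients, the paper proves a general lemma (\cref{prop:transversecontact}): if a hypertight SHS $(\omega,\lambda)$ on an irreducible rational homology sphere satisfies $\omega|_\xi>0$ for a contact structure $\xi$, then $\xi$ is tight with zero Giroux torsion. The proof runs a Gromov--Eliashberg Bishop-family argument in the cobordism $([0,\varepsilon]\times M,\,\omega+d(t\lambda))$, using hypertightness to forbid breaking at the concave end; vanishing Giroux torsion then follows by stacking on Wendl's cobordism from a torsion domain to an overtwisted contact structure and repeating the argument. Your FDTC route is plausible in spirit, but the standard Honda--Kazez--Mati\'c statement runs in the other direction (overtwisted $\Rightarrow$ some supporting open book is not right-veering), and extracting zero Giroux torsion from \cite{colin.honda.ReebVectorFields} at an arbitrary positive coefficient would require a sharper citation than you give.
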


Thus, we may combine \cite{colin.giroux.ea.FinitudeHomotopiqueIsotopique} with \cref{thm:main} to obtain:
\begin{mainthm}\label{thm:contact_finiteness}
    On any closed, oriented rational homology 3-sphere, there are finitely many transitive pseudo-Anosov flows up to orbit equivalence which admit a positive Birkhoff section.
\end{mainthm}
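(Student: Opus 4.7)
The plan is a two-layer finiteness argument: first bound the set of exact stable Hamiltonian classes that arise from pseudo-Anosov flows admitting positive Birkhoff sections, then apply \cref{thm:main} within each class.

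First, to each transitive pseudo-Anosov flow $\phi$ on the rational homology 3-sphere $M$ admitting a positive Birkhoff section, \cref{prop:tight} associates a tight contact structure $\xi_\phi$ of zero Giroux torsion whose Reeb flow is possibly-non-exact stable homotopic to $(\omega_\phi,\lambda_\phi)$. Since $H^2(M;\R)=0$, every closed 2-form on $M$ is exact, so the two notions of stable homotopy coincide on $M$ (as noted in the remark following \cref{constr:main}). Hence the exact stable homotopy class of $(\omega_\phi,\lambda_\phi)$ contains the Reeb flow of a tight, torsion-free contact structure.

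Next I would invoke the Colin--Giroux--Honda theorem \cite{colin.giroux.ea.FinitudeHomotopiqueIsotopique}: there are only finitely many tight contact structures on $M$ up to isotopy having zero Giroux torsion. Since isotopic contact structures induce exact-stable-homotopic contact forms, the assignment $\phi \mapsto [(\omega_\phi,\lambda_\phi)]$ takes values in a finite set of exact stable Hamiltonian classes. Finally, \cref{thm:main}, in its finiteness form --- which holds without the hyperbolicity hypothesis --- gives at most finitely many transitive pseudo-Anosov flows in each such class up to orbit equivalence. A finite sum of finite numbers is finite.

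The main obstacle I anticipate is bookkeeping rather than novelty: one must verify that the precise form of Colin--Giroux--Honda being cited really does apply to every tight, torsion-free contact structure on an arbitrary rational homology 3-sphere (not only atoroidal ones), and that every contact structure produced by \cref{prop:tight} genuinely lies within that class. Once these compatibility points are confirmed, the argument is a direct concatenation of \cref{prop:tight}, \cite{colin.giroux.ea.FinitudeHomotopiqueIsotopique}, and \cref{thm:main}.
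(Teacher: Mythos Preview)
Your proposal is correct and matches the paper's own argument exactly: the paper states \cref{thm:contact_finiteness} as an immediate combination of \cref{prop:tight}, the Colin--Giroux--Honda finiteness theorem, and \cref{thm:main}, with the rational homology sphere hypothesis used precisely so that possibly-non-exact stable homotopy becomes exact stable homotopy. Your anticipated bookkeeping point about the scope of \cite{colin.giroux.ea.FinitudeHomotopiqueIsotopique} is legitimate but is not addressed further in the paper either; the result is simply cited as applying to tight contact structures of zero Giroux torsion on an arbitrary closed 3-manifold.
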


It is now tempting to attack \cref{conj:anosov_finiteness} by proving a finiteness result for stable Hamiltonian structures. There is so far no natural candidate definition for ``tight stable Hamiltonian structure''. However, the stronger condition of hypertightness generalizes easily. We say that a stable Hamiltonian structure is \emph{hypertight} if its Reeb flow has no contractible orbits. The stable Hamiltonian structures produced by \cref{constr:main} are hypertight, so we ask:

\begin{ques}Are there finitely many hypertight stable Hamiltonian structures up to stable homotopy on any given irreducible atoroidal rational homology 3-sphere?
\end{ques}

\begin{subsection}{Dehn surgery problems}
      One of the most enduringly popular questions in low dimensional topology is that of enumerating the ways in which a given manifold can be realized as Dehn surgery on a knot in $S^3$. A reason for this popularity is the wide variety of tools that may be brought to bear on the problem, including hyperbolic geometry, Floer homology, foliation theory, and character varieties. Problem 3.6D from Kirby's problem list asks:
    \begin{ques}
        Is there a 3-manifold $M$ and a slope $n$ such that $M$ arises as $n$-surgery for along infinitely many different knots $K\in S^3$?
    \end{ques}

    Osoinach answered this question in the affirmative by constructing an infinite family of hyperbolic knots with the same 0-surgery \cite{osoinach.ManifoldsObtainedSurgery}. Abe, Jong, Luecke, and Osoinach extended this construction to other integer slopes \cite{abe.jong.ea.InfinitelyManyKnots}. However, the finiteness question remains open for other slopes. We make partial progress on the problem for many non-integer slopes. Let $S^3_{p/q}(K)$ denote the result of $p/q$ Dehn surgery on a knot $K$ in $S^3$.

    \begin{mainthm}\label{thm:surgery}
        Define the set of slopes $$V=\{p/q \mid \gcd(p,q)=1,\, q \geq 6,\text{ and } \,p \neq 0, \pm 1, \pm 2 \mmod q\}.$$
        For any 3-manifold $M$, there are at most finitely many fibered hyperbolic knots $K\subset S^3$ and slopes $p/q\in V$ with $S^3_{p/q}(K)\cong M$.
    \end{mainthm}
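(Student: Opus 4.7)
The plan is to associate to each admissible pair $(K, p/q)$ with $p/q \in V$ and $S^3_{p/q}(K)\cong M$ a transitive pseudo-Anosov flow on $M$ carrying a positive Birkhoff section, and then to invoke \cref{thm:contact_finiteness} together with a bound on the number of pairs realizing each such flow.

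For the construction: since $K$ is a fibered hyperbolic knot in $S^3$, its complement fibers over $S^1$ with pseudo-Anosov monodromy $\phi_K$, and the suspension flow is a pseudo-Anosov flow on $\interior(S^3\setminus K)$. Near the boundary torus, its dynamics are controlled by the fractional Dehn twist coefficient $c(\phi_K)\in \Q$. I would blow up the boundary orbit, glue in a solid torus to realize the $p/q$ Dehn filling, and extend the flow so that the core of the glued solid torus becomes a new closed orbit; the original fiber surface then caps off across a meridian disk to yield a positive Birkhoff section on $M$. The condition $p/q \in V$ is equivalent to $p/q$ being at distance strictly greater than $2/q$ from every integer, which, in view of the known bounds on the FDTC of fibered hyperbolic knots in $S^3$, ensures that $|p/q - c(\phi_K)|$ is sufficiently large for the blowup-and-extend procedure to succeed via a cone-field argument in the spirit of \cref{constr:main}.

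Since $p\neq 0$, the manifold $M$ is a rational homology 3-sphere, so \cref{thm:contact_finiteness} bounds by some $N(M)<\infty$ the number of orbit-equivalence classes of transitive pseudo-Anosov flows on $M$ admitting a positive Birkhoff section. It remains to show that each such flow $\phi$ is obtained from only finitely many pairs $(K, p/q)$. Such a pair is encoded by the closed orbit $c$ of $\phi$ playing the role of the surgery core, together with the filling slope: by Gordon-Luecke, the knot $K$ is determined by the complement $M\setminus c\cong S^3\setminus K$, and the slope is the unique one whose filling of $M\setminus c$ is $S^3$. Thus it suffices to bound the number of closed orbits $c$ of $\phi$ whose complement in $M$ is a fibered hyperbolic knot complement in $S^3$; each such orbit must bound a positive Birkhoff section with connected binding whose page is the corresponding fiber surface.

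The main obstacle is this last bookkeeping step. The construction of the flow and its extension across the Dehn filling, while delicate, follows the blowup technique of \cref{constr:main} applied to the boundary orbit of the mapping torus, and the role of the set $V$ is purely to keep the slope uniformly separated from the integer-valued ``bad slopes'' associated to the FDTC. The finiteness of compatible core orbits for a fixed flow should follow from Fried's classification of Birkhoff sections (adapted to the pseudo-Anosov setting) by rational points in a finite-dimensional polyhedral cone, combined with Gordon-Luecke rigidity, which together limit how many closed orbits of $\phi$ can have a complement homeomorphic to a fibered hyperbolic knot complement in $S^3$.
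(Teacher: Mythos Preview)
Your overall architecture matches the paper's: build a transitive pseudo-Anosov flow on $M$ from the fibration of $S^3\setminus K$ via Fried--Goodman surgery, invoke \cref{thm:contact_finiteness}, and then argue that each flow accounts for only finitely many pairs $(K,p/q)$. The construction step is essentially right, though the paper phrases it via the degeneracy slope rather than the fractional Dehn twist coefficient: since $S^3$ admits no pseudo-Anosov flow, the meridian meets the degeneracy slope at most once, forcing the degeneracy slope to be $1/0$ or an integer $r/1$, and then $\iota(\text{deg.\ slope},p/q)\in\{|q|,|p-rq|\}$.

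The gap is in your last step, and it stems from not drawing the right conclusion from the slope condition. The intersection number $\iota(\text{deg.\ slope},p/q)$ is precisely the number of prongs at the surgery core in the resulting flow. The set $V$ is engineered so that this number is at least $3$ in both cases---that is why $\pm 2$ is excluded, not merely to make the Fried surgery go through (which only needs intersection number $\geq 2$). Hence the surgery core is a \emph{singular} orbit of $\phi$. A pseudo-Anosov flow has only finitely many singular orbits, so there are finitely many candidates for the core, and Gordon--Luecke finishes the proof.

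Your proposed substitute for this step---bounding, via Fried's cone of Birkhoff sections, the closed orbits of $\phi$ whose complement is a fibered hyperbolic knot complement in $S^3$---does not work as stated. That cone has infinitely many integral points, and there is no evident mechanism forcing only finitely many of them to have connected binding with $S^3$-complement. You correctly flagged this as the ``main obstacle''; the resolution is not more Birkhoff-section technology but the observation above that $V$ pins the core to the finite singular locus.
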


    If \cref{conj:anosov_finiteness} holds, then the hypothesis ``fibered'' can be removed using the pseudo-Anosov flow constructed by Gabai and Mosher on a hyperbolic knot complement \cite{mosher.LaminationsFlowsTransverse}. Note that by Hanselman's work on the cosmetic surgery conjecture, there is at most one slope $p/q$ for each knot $K$ such that $S^3_{p/q}(K)\cong M$ \cite{hanselman.HeegaardFloerHomology}. Despite the numerous successes of Floer homology in studying Dehn surgery, \Cref{thm:surgery} appears out of reach of those techniques because there exist infinite sequences of hyperbolic knots (even fibered hyperbolic knots) with the same knot Floer homology \cite{hedden.watson.GeographyBotanyKnot}.

      Say that a slope $p/q$ is \emph{characterizing} for $K$ if $S^3_{p/q}(K)$ does not arise as $p/q$ Dehn surgery for any other knot in $S^3$. Intuition from hyperbolic geometry suggests that for a hyperbolic knot $K$ and $|p|+|q|$ large, $p/q$ should be characterizing for $K$. Thurston's hyperbolic Dehn surgery theorem says that for $|p|+|q|$ sufficiently large, $S^3_{p/q}(K)$ is hyperbolic and the Dehn surgery core is the shortest geodesic in the resulting manifold. Thus, the geometry of $S^3_{p/q}(K)$ remembers the manner in which it was obtained from $S^3$. By the Gordon--Luecke theorem, we may return to $S^3$ in a unique way by performing surgery on this short geodesic. The problem with this argument is that the required lower bound on $|p|+|q|$ is not universal, so there might be another sneaky $K'$ with $S^3_{p/q}(K')\cong S^3_{p/q}(K)$ and whose corresponding Dehn surgery core is not the shortest geodesic. Lackenby explains how to circumvent these problems in \cite{lackenby.EveryKnotHas}, and proves that every knot in $S^3$ has some characterizing slope.

    Our approach to \cref{thm:surgery} replaces hyperbolic geometry with pseudo-Anosov geometry. Taking the place of Mostow rigidity is \cref{thm:contact_finiteness}. The role of short geodesics is played by the singular orbits. Thurston's hyperbolic Dehn surgery theorem is replaced with Fried--Goodman surgery. The exceptional slopes are those that intersect the degeneracy slope at most once. Our good control over the degeneracy slope gives us the universal bounds that were missing in the hyperbolic setting.
\end{subsection}

\end{section}

\section*{Acknowledgements}
I would like to thank the organizers and participants of the ``Symplectic Geometry and Anosov Flows'' workshop in Heidelberg for a lively week, many interesting conversations, and helpful feedback on drafts of this paper. I benefited a lot from experts on both the dynamics side and the symplectic side.

\begin{section}{Preliminaries}
    $M$ is always a closed, oriented, irreducible 3-manifold. We use $\phi$ to denote a transitive pseudo-Anosov flow on $M$. All pseudo-Anosov flows in the paper are assumed to be transitive. Note that transitive pseudo-Anosov flows do not exist on reducible 3-manifolds. An orbit of $\phi$ is called \emph{nonrotating} if the return map along the orbit returns each sector bounded by stable prongs to itself; otherwise it is called \emph{rotating}.
    \subsection{Stable Hamiltonian structures} A stable Hamiltonian structure (or SHS for short) on a closed, oriented 3-manifold $M$ is a pair $(\omega, \lambda)$ where $\omega$ is a 2-form, $\lambda$ is 1-form, and
\begin{align*}
    d\omega &= 0\\
    \omega \wedge \lambda &> 0\\
    \ker(d\lambda) &\subset \ker(\omega)
\end{align*}
We say that $\lambda$ \emph{stabilizes} $\omega$. The \emph{Reeb vector field} of $(\omega,\lambda)$ is the unique vector field $R$ in $\ker(\omega)$ with $\lambda(R)=1$. The Reeb flow preserves the volume form $\omega \wedge \lambda$.  A useful mnemonic to remember the conditions is that they look Poincar\'e dual to the data of a volume preserving flow with a Birkhoff section:
\begin{align*}
    \omega &\leadsto \text{nowhere vanishing flow $\phi$}\\
    d\omega = 0 &\leadsto \text{$\phi$ is volume preserving}\\
    \lambda &\leadsto \text{pages of an open book}\\
    \omega \wedge \lambda > 0 &\leadsto \text{$\phi$ is transverse to the pages of the open book}\\
    \ker(d\lambda) \subset \ker(\omega) &\leadsto \text{boundary of pages are flowlines}
\end{align*}
 A contact form $\alpha$ is an SHS with $\omega=d\alpha$ and $\lambda=\alpha$. A fibered 3-manifold supports an SHS for which $\ker(\lambda)$ is a fibration, $d\lambda=0$, and $\omega$ a closed 2-form positive on the leaves of the fibration. A manifold with an SHS breaks into regions where $\lambda \wedge d\lambda > 0$ (the positive contact region), $\lambda \wedge d\lambda < 0$ (the negative contact region), and $d\lambda=0$ (the integrable region).

 The \emph{admissible interval} of an SHS is the largest interval $(a,b)\subset \R$ such that $\omega+t\,d\lambda$ is a nowhere vanishing 2-form for all $t\in (a,b)$. From the persective of Reeb dynamics, the different SHS coming from different choices of $t\in(a,b)$ behave the same. A \emph{strong symplectic cobordism} between two SHS $(\omega_1,\lambda_1)$ and $(\omega_2,\lambda_2)$ is a symplectic manifold $([0,1]\times M,\Omega)$ such that $\Omega|_{0\times M} = \omega_1 + \varepsilon_1 \lambda_1$ and $\Omega|_{1\times M} = \omega_2 + \varepsilon_2 \lambda_2$, and $\varepsilon_1$ and $\varepsilon_2$ are in the admissible intervals of $(\omega_1,\lambda_1)$ and $(\omega,\lambda_2)$ respectively. A \emph{trivial symplectic cobordism} from $(\omega,\lambda)$ to $(\omega,\lambda)$ is a symplectic manifold $([0,\varepsilon]\times M,\Omega)$ with $\Omega = \omega + t\,d\lambda$ for $\varepsilon$ a small enough constant to make it a symplectic cobordism. All the symplectic cobordisms in this paper are topologically trivial.

We say that two stable Hamiltonian structures $(\omega_0, \lambda_0)$ and $(\omega_1, \lambda_1)$ are \emph{possibly-non-exact stable homotopic} if they are homotopic through a smooth 1-parameter family of SHS $(\omega_t,\lambda_t)_{t\in [0,1]}$. We say that they are \emph{exact stable homotopic} if $\frac d {dt} \omega_t$ is exact for all $t\in [0,1]$. We say that $(\omega_0, \lambda_0)$ and $(\omega_1,\lambda_1)$ are \emph{cobordism equivalent} if there are strong symplectic cobordisms $X$ and $Y$ in both directions such that $X \circ Y$ and $Y \circ X$ are homotopic to trivial cobordisms. Unfortunately, this is not an equivalence relation, because strong cobordisms cannot always be composed in the stable Hamiltonian setting! To fix this, we say that $(\omega_0, \lambda_0)$ and $(\omega_1,\lambda_1)$ are \emph{broken cobordism equivalent} if there is a sequence of cobordism equivalences starting at $(\omega_0,\lambda_0)$, ending at $(\omega_1,\lambda_1)$, and passing through only Morse-Bott SHS. Cieliebak and Volkov showed that if two SHS are exact stable homotopic, then they are broken cobordism equivalent \cite[Corollary 7.27]{cieliebak.volkov.FirstStepsStable}. Many other foundational facts about stable Hamlitonian structures were proven by Cieliebak and Volkov in \cite{cieliebak.volkov.FirstStepsStable}. See their Section 7 for a more thorough treatment of homotopies and cobordisms.

\subsection{Birkhoff sections}
A Birkhoff section for a flow on $M$ is a compact oriented surface embedded in $M$ such that its boundary is a collection of closed orbits (oriented either positively or negatively), its interior is positively transverse to the flow, and it intersects every orbit in forward and backwards time. We say that a Birkhoff section is \emph{positive} if all of its boundary components are positively oriented flowlines.
We will often use the following fact:
\begin{prop}\label{prop:avoid}
    Any transitive pseudo-Anosov flow on a closed, oriented 3-manifold admits a Birkhoff section. Moreover, we can choose the Birkhoff section so that any given finite collection of orbits does not intersect the boundary of the Birkhoff section.
\end{prop}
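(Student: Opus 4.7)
The plan is to combine the existence of Birkhoff sections for transitive pseudo-Anosov flows --- a classical result --- with a flexibility argument for the ``moreover'' statement.

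First I would invoke the standard construction (Fried in the Anosov case, extended by Brunella and Mosher to the pseudo-Anosov case): take a Markov partition $\{R_1, \ldots, R_N\}$ of $\phi$, consisting of rectangles transverse to the flow and bounded by stable and unstable arcs (prong-shaped at singular orbits). These rectangles glue along their boundaries to form a branched surface transverse to $\phi$; after resolving the branch locus (and, if necessary, passing to a cyclic cover and pushing back down) one obtains an embedded Birkhoff section $\Sigma$. Its boundary $\partial \Sigma$ is a finite union of ``corner orbits'' of the Markov partition, namely the periodic orbits through the corners of the rectangles.

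Next, for the moreover statement, let $\{c_1, \ldots, c_k\}$ be the orbits to avoid. I would exploit the flexibility in the choice of Markov partition: the stable/unstable boundaries of each rectangle can be perturbed slightly without altering the combinatorial structure of the partition, and such a perturbation moves each corner to a nearby periodic orbit. Since the set of closed orbits is countable while each small transverse neighborhood of a corner meets uncountably many distinct periodic orbits (by density of periodic orbits in a transitive pseudo-Anosov flow and the transverse Cantor structure of the invariant foliations), a generic small perturbation produces corner orbits entirely disjoint from $\{c_1, \ldots, c_k\}$. The resulting Birkhoff section then has boundary avoiding the prescribed finite collection.

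The main obstacle will be ensuring the construction is carried out coherently near singular orbits of $\phi$, where rectangles have prong corners; this is standard in the pseudo-Anosov literature but requires some care. As a backup, if the Markov picture proves inconvenient, I would instead start with any Birkhoff section $\Sigma_0$ and modify it by cut-and-paste: for each $c_i \in \partial \Sigma_0$, choose a nearby periodic orbit $c_i'$ disjoint from $\{c_1, \ldots, c_k\}$ together with a transverse immersed annulus joining $c_i$ to $c_i'$, and use this annulus to do a Dehn-twist style surgery that replaces the boundary component $c_i$ with $c_i'$. Iterating over the (finitely many) offending boundary components gives the desired Birkhoff section.
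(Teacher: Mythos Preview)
Your overall strategy---existence is classical (Fried, Brunella), and avoidance follows from the flexibility of the construction---matches the paper. But your implementation of the avoidance step has a gap.

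You assert that $\partial\Sigma$ consists of ``corner orbits'', i.e.\ periodic orbits through the corners of the Markov rectangles, and that perturbing the stable/unstable edges of the rectangles moves these corners to other nearby periodic orbits. Corners of Markov rectangles, however, are merely transverse intersections of stable and unstable arcs; they lie on periodic orbits only by accident, and a generic perturbation of the rectangle edges will not place them on periodic orbits at all. So the sentence ``such a perturbation moves each corner to a nearby periodic orbit'' is not justified, and this is not how the boundary of the Birkhoff section actually arises in the Fried--Brunella construction.

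The paper (which gives only a one-sentence sketch, not a formal proof) instead appeals to the building blocks directly: the Birkhoff section is assembled from \emph{partial sections}---small immersed transverse surfaces whose boundaries are already chosen periodic orbits---and one can construct a partial section through any given point whose projection to the orbit space is as small as desired. Hence the periodic orbits appearing as boundaries can be selected from the outset to miss any prescribed finite set. Your backup idea (surger along a transverse annulus from $c_i$ to a nearby $c_i'$) is in fact asking for exactly such a partial section; it is the right mechanism, but the existence of that annulus is the substantive point and should be justified by invoking the partial-section construction in Fried/Brunella rather than assumed.
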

See \cite{fried.TransitiveAnosovFlows} for the original argument, and \cite{brunella.SurfacesSectionExpansive} for the generalization to pseudo-Anosov flows. The fact that we can avoid a given finite collection of orbits is not explicitly stated, but it is clear from the strategy of proof. One builds a Birkhoff section by piecing together ``partial sections", and they construct a partial section whose interior contains any given point in $M$ and whose projection to the orbit space is as small as desired, and therefore can be taken to miss any finite collection of orbits.

\subsection{Compatible open books}
A \emph{signed open book decomposition} of a 3-manifold $M$ is an oriented link $L$ along with a fibration $\pi:M\setminus L \to S^1$, such that the $\pi$-images of all meridians are non-contractible in $S^1$. Note that the orientations on the components of $L$ may differ from the orientation inherited as the boundary of the pages. A Birkhoff section gives rise to an open book decomposition of the ambient 3-manifold where the oriented link is the binding oriented with the flow. Given a signed open book decomposition $\mathcal B$, we say that an SHS $(\omega, \lambda)$ is $\emph{compatible}$ with $\mathcal B$ if the pages of $\mathcal B$ are Birkhoff sections for the Reeb flow of $(\omega, \lambda)$ and the binding components are oriented closed orbits of the Reeb flow.

\begin{prop}[{\cite[Theorem 4.2]{cieliebak.volkov.FirstStepsStable}}]\label{prop:openbook}
    Suppose $(\omega_1, \lambda_1)$ and $(\omega_2,\lambda_2)$ are compatible with the same signed open book decomposition. Then $(\omega_1,\lambda_1)$ and $(\omega_2,\lambda_2)$ are possibly-non-exact stable homotopic. If in addition $[\omega_1]=[\omega_2]$, then the two stable Hamiltonian structures are exact stable homotopic.
\end{prop}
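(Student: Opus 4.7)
The plan is to interpolate between the two compatible SHS through a path of SHS compatible with $\mathcal{B}$, by splitting $M$ into a tubular neighborhood of the binding $L$ and the complementary fibered piece, and treating each region separately with a Moser-type argument. The output is a smooth family $(\omega_t,\lambda_t)_{t \in [0,1]}$ compatible with $\mathcal B$ interpolating between $(\omega_1,\lambda_1)$ and $(\omega_2,\lambda_2)$.

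Near each binding component $K \subset L$, both Reeb flows have $K$ as a closed orbit with orientation dictated by the sign assigned to $K$ in $\mathcal{B}$, and the pages of $\mathcal B$ spiral into $K$ in the same topological manner. I would invoke a tubular-neighborhood-type normal form for SHS at a closed Reeb orbit adapted to a local half-open-book model (so that the pages near $K$ look like half-planes emanating from $K$). Since the space of such germs with fixed sign and fixed local open book is contractible, I can find an ambient isotopy of $M$ supported near $L$, realized through compatible SHS, taking $(\omega_2,\lambda_2)$ to an SHS agreeing with $(\omega_1,\lambda_1)$ on a small neighborhood of $L$.

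On the complement $N := M \setminus \mathrm{nbhd}(L)$, the open book is a fibration $\pi: N \to S^1$, and a compatible SHS on $N$ amounts to a closed 2-form $\omega$ positive on the fibers of $\pi$ together with a 1-form $\lambda$ satisfying the SHS conditions, with Reeb field positively transverse to the fibers. The convex combination $\omega_t = (1-t)\omega_1 + t\omega_2$ remains closed and fiber-positive, since fiber-positivity is convex. To build $\lambda_t$, I would modify $\lambda_1$ and $\lambda_2$ by exact 1-forms (this does not change the SHS up to exact stable homotopy) to place them in a common normal form adapted to $\pi$, in which the Reeb direction of $\omega_t$ lies in $\ker d\lambda_t$ for all $t$. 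The main obstacle is precisely the kernel condition $\ker d\lambda_t \subset \ker \omega_t$, which is not preserved under arbitrary convex combinations and has to be engineered by the above gauge choices. Openness of the conditions $\omega \wedge \lambda > 0$ and fiber-transversality of the Reeb field, together with the matching near $L$ from the previous step, let the interpolation be glued globally.

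For the moreover clause, suppose $[\omega_1] = [\omega_2] \in H^2(M;\R)$, so $\omega_2 - \omega_1 = d\beta$ for some 1-form $\beta$. I would then arrange the interpolation so that $\omega_t = \omega_1 + t\, d\beta$, giving $\tfrac{d}{dt}\omega_t = d\beta$ exact for all $t$; combined with the exact modifications of the $\lambda_i$ above, this upgrades the homotopy to an exact stable homotopy, as required.
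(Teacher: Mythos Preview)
The paper does not prove this proposition at all: it is simply quoted as \cite[Theorem 4.2]{cieliebak.volkov.FirstStepsStable} and used as a black box. So there is no ``paper's own proof'' to compare against; you are sketching a proof of a cited result.

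Your outline (normalize near the binding, then interpolate on the fibered piece) is the right shape, and is indeed how Cieliebak--Volkov proceed. But the sketch has a real gap at the step you flag yourself: the construction of $\lambda_t$ on the fibered piece. Saying ``modify $\lambda_1$ and $\lambda_2$ by exact 1-forms to place them in a common normal form'' does not explain how the kernel condition $\ker\omega_t \subset \ker d\lambda_t$ is arranged for the moving $\omega_t$, and adding an exact form to $\lambda$ does not in general produce a stabilizing form for a \emph{different} $\omega$. The clean mechanism, which your sketch is circling, is this: on the complement of the binding the closed $1$-form $\pi^*d\theta$ stabilizes \emph{every} compatible $\omega$, since $d(\pi^*d\theta)=0$ makes the kernel condition vacuous and transversality of the Reeb field to the pages gives $\omega\wedge \pi^*d\theta>0$. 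So one first performs a \ref{item:type1} homotopy from $(\omega_i,\lambda_i)$ to $(\omega_i,\pi^*d\theta)$ by linear interpolation in the stabilizing form, and only then convexly interpolates the $\omega_i$ while keeping $\lambda=\pi^*d\theta$ fixed. Your phrase ``modify by exact 1-forms'' does not capture this and as written is not a proof.

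The other soft spots are: the contractibility of the space of compatible SHS germs near a signed binding component is a genuine lemma, not a one-liner; and since $\pi^*d\theta$ does not extend over the binding, the gluing of the two regimes across a collar needs to be carried out, not just asserted via ``openness''. None of this is fatal to your strategy, but as written the argument is an outline rather than a proof.
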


\end{section}

\begin{section}{Constructions}
\subsection{Blowup of hyperbolic fixed points}\label{subsec:blowup}
Our first task is to construct a smooth model for the blowup of a hyperbolic fixed point (or more generally, a $k$-pronged pseudo-hyperbolic fixed point). Equip $\R^2$ with its standard symplectic form. Let $g(x,y)$ be a bump function supported on the unit disk with $\Z/2$ rotational symmetry around the origin. Fix parameters $A$ and $\varepsilon>0$. Define the Hamiltonian $H_{A} = x^2-y^2 + A\varepsilon^2\,g(x/\varepsilon,y/\varepsilon)$. Let $f_{A,t}:\R^2 \to \R^2$ be the time $t$ map of the associated Hamiltonian flow.

$f_{0,t}$ has a single hyperbolic fixed point at the origin whose Lyapunov exponents are $\pm t$. When $|A|$ is sufficiently large, $f_{A,t}$ has two hyperbolic fixed points $x_h$ and $x_h'$ and an elliptic fixed point $x_e$. The elliptic fixed point sits inside a region bounded by the separatrices connecting $x_h$ and $x_h'$; we call this region the \emph{eye} because of its shape. The parameter $\varepsilon$ controls the size of the eye. The sign of $A$ dictates whether the flow rotates clockwise or counterclockwise inside the eye. In this situation, we call $f_{A,t}$ a \emph{clockwise or counterclockwise blow-up} of $f_{0,t}$.

This construction was previously used by Cotton-Clay to smoothen pseudo-Anosov homeomorphisms \cite{cotton-clay.SymplecticFloerHomology}. He was interested only in order 1 fixed points, but we need more: we want to know that all of the dynamics outside the eye is unchanged by blowup. The next lemma will be useful in a cone field argument to show that the dynamics outside the eye remains hyperbolic. We say that a diffeomorphism $h$ \emph{strictly contracts} a cone field $C$ if at every point $p$ we have $$Dh(\overline{C(p)}) \subset \interior \, C(h(p)).$$

\begin{figure}
    \centering
    \includegraphics[width=4.5in]{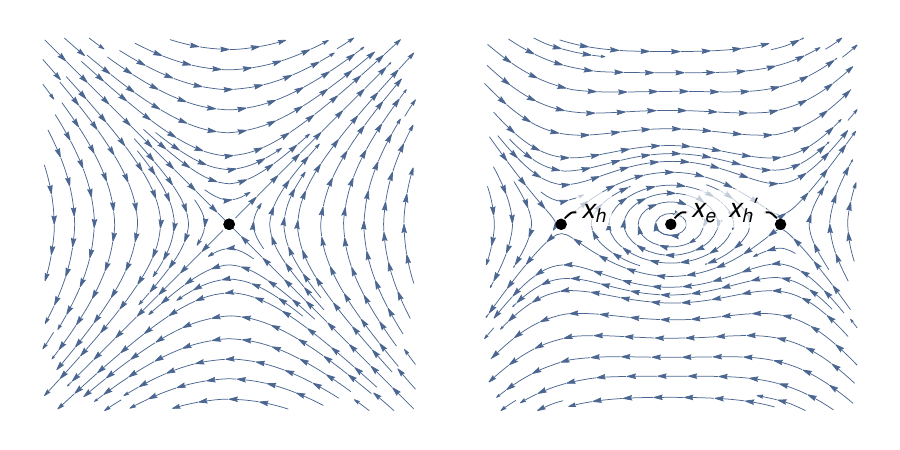}
    \caption{A clockwise blowup}
\end{figure}
\begin{lem}\label{lem:cone}
    Fix a value of $A$. Let $B_r$ denote the radius $r$ disk around the origin. Let $f_{A}^{ret,r}$ be the first return map of the flow $f_{A,t}$ to $\R^2 \setminus B_r$. For every $R>0$, there exists $\varepsilon$ sufficiently small that $f_{A}^{ret,R}$ strictly contracts the cone field $$C=\{(x,y,v_x,v_y)\in T\R^2 \mid v_xv_y > 0\text{ and } x^2+y^2=r^2\}.$$
\end{lem}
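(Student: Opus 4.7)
The plan is to exploit the fact that outside $B_\varepsilon$ the vector field $X_{H_A}$ coincides with the linear saddle vector field $X_{H_0}$, whose time-$t$ flow $e^{Mt}$ strictly contracts $C$ because its unstable eigenline lies in the interior of $C$. By choosing $\varepsilon$ small, I want every returning orbit to spend enough time in $\R^2\setminus B_\varepsilon$ that this accumulated linear contraction overwhelms any distortion the orbit picks up while inside $B_\varepsilon$.

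A direct integral along the hyperbolas $x^2-y^2=c$ shows that whenever an orbit enters $B_\varepsilon$ (so $|c|<\varepsilon^2$), the time it spends in the annulus $B_R\setminus B_\varepsilon$ is at least $2\log(R/\varepsilon)-O_R(1)$. Thus the outside-$B_\varepsilon$ pieces of the return-map derivative combine to $e^{MT_{\mathrm{out}}}$ with $T_{\mathrm{out}}\to\infty$ as $\varepsilon\to 0$, collapsing $C$ into an exponentially narrow wedge about the $M$-unstable line.

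Under the rescaling $(x,y)=\varepsilon(u,v)$ the flow on $B_\varepsilon$ becomes the fixed, $\varepsilon$-independent flow of $\tilde H(u,v)=u^2-v^2+Ag(u,v)$ on the unit disk, so the middle factor of the return-map derivative $e^{MT_2}\cdot D\tilde f_{T_\varepsilon}\cdot e^{MT_1}$ is a symplectic matrix determined by the rescaled trajectory alone. On rescaled trajectories staying away from the stable manifolds of the eye's two hyperbolic points $x_h,x_h'$, this middle factor is uniformly bounded, and the outer contraction dominates, yielding strict contraction of $C$ for $\varepsilon$ small.

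The main obstacle is orbits whose rescaled trajectory passes arbitrarily close to a stable manifold of some $x_h$: there $T_\varepsilon\to\infty$ and $D\tilde f_{T_\varepsilon}$ grows like $e^{\lambda(A)T_\varepsilon}$ in the local unstable direction at $x_h$. I would resolve this by the geometric fact that the local unstable directions at $x_h,x_h'$ themselves lie in the open cone $C$, a consequence of the eye being a $\Z/2$-symmetric perturbation of the linear saddle whose $x_h$ sit near the origin with unstable/stable directions close to the $M$-unstable/stable directions. Then the large image produced by the middle factor still lies in $C$, and $e^{MT_2}$ pulls it further toward the $M$-unstable line, keeping it strictly inside $C(f_A^{ret,R}(p))$. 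Verifying this cone membership in every regime of $A$ that produces the eye is the technical heart of the argument.
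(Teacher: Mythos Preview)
Your three-factor decomposition and appeal to the scale invariance of $H_A$ is exactly the paper's argument: the paper also writes $Df_A^{ret,R}(p_0)$ as a product of two outer linear factors and the middle factor $Df_A^{ret,10\varepsilon}(p_1)$, notes the middle factor is $\varepsilon$-independent after rescaling, and asserts that the growing outer factors dominate. You go further than the paper in flagging that the middle factor is \emph{not} uniformly bounded --- for rescaled entry points approaching the outward stable separatrix of $x_h$ the transit time through $B_{10}$ diverges and $\|Df_A^{ret,10}\|\to\infty$ --- and the paper's line ``the middle term is bounded above in norm independent of $\varepsilon$'' does gloss over precisely this regime.

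Your proposed resolution, though, misfires in its justification. In rescaled coordinates the perturbation $Ag$ has unit size, so $x_h,x_h'$ sit at fixed positions determined by $A$ and $g$, not ``near the origin'', and there is no perturbative reason their unstable eigendirections lie in $C$; you are conflating the rescaled and unrescaled pictures. Moreover, what controls the range of the nearly-rank-one middle factor in this regime is the tangent to the outward unstable \emph{manifold} of $x_h$ at the exit point $p_2\in\partial B_{10}$, not the eigendirection at $x_h$ itself. That tangent does lie in $C$, but for a reason independent of $A$ and $g$: outside the support of the bump the orbit lies on a level curve $u^2-v^2=c$, whose tangent at $(u,v)$ is parallel to $(v,u)$; at any point where the orbit is \emph{exiting} a round disk one has $\tfrac{d}{dt}(u^2+v^2)>0$, which for the linear saddle forces $uv>0$, hence $(v,u)\in C$. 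Dually, at the entry point $p_1$ the tangent to the stable separatrix has $uv<0$ and so lies strictly outside $\overline C$, ensuring the incoming narrowed cone $e^{(t_1-t_0)L}\,\overline C$ is not annihilated by the near-rank-one middle factor. These two observations supply the missing uniformity without any case analysis in $A$.
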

\begin{proof}
At the outset, let us take $\varepsilon < 0.1R$. The flow of $f_{A,t}$ agrees with the flow of $f_{0,t}$ on the annulus $10\varepsilon \leq \sqrt{x^2+y^2} \leq R$. The flow of $f_{0,t}$ is hyperbolic and contracts the desired cone field, so we need only be concerned with flowlines which enter $B_{10\varepsilon}$. Note that a flowline of $f_{A,t}$ enters and leaves $B_{10\varepsilon}$ at most once. So let us consider a flowline which enters $B_{R}$ at time $t_0$ at point $p_0$, enters $B_{10\varepsilon}$ at time $t_1$ at point $p_1$, exits $B_{10\varepsilon}$ at time $t_2$ at point $p_2$, and exits $B_{R}$ at time $t_3$ at point $p_3$. We can express $Df_A^{ret, R}(y_0)$ as the product of the derivatives of the flow along these segments.

\begin{align}
    Df_A^{ret, R}(p_0) &= Df_{A,t_3-t_2}(p_2) Df_{A,t_2-t_1}(p_1) Df_{A,t_1-t_0}(p_0)\\
    &=\exp{\left( (t_3-t_2) \begin{pmatrix} 0 & 1\\ 1 & 0 \end{pmatrix} \right)} Df_{A}^{ret, 10\varepsilon}(p_1) \exp{\left( (t_1-t_0) \begin{pmatrix} 0 & 1\\ 1 & 0 \end{pmatrix} \right)} \label{eq:threelegs}
\end{align}

$H_A$ is designed to have scale invariance with respect to $\varepsilon$: doubling $\varepsilon$ is the same as scaling $x$ and $y$ by a factor of 2 and multiplying $H_A$ by a constant factor. Multiplying $H_A$ by a constant factor does not change the flow, so the norm of $Df_{A}^{ret,10\varepsilon}$ acting on tangent vectors is independent of $\varepsilon$. Therefore, the middle term of \cref{eq:threelegs} is bounded above in norm independent of $\varepsilon$.

As $\varepsilon\to 0$, the minimum possible values of $t_1-t_0$ and $t_3-t_2$ go to infinity; it takes a long time for flowlines to reach a $10\varepsilon$ ball around the origin. Therefore, the first and third terms in \cref{eq:threelegs} dominate in the limit $\varepsilon\to 0$ and the product is a hyperbolic matrix which contracts the cone field $C$.
\end{proof}
\begin{lem}\label{lem:cone2}
    Fix a value of $A$ and $R$. There is $\varepsilon$ small enough that the cone field $$C=\{(x,y,v_x,v_y)\in T\R^2 \mid v_xv_y > 0\}$$ extends to a cone field on the complement of the eye which is strictly contracted by the flow of $f_{A,t}$.
\end{lem}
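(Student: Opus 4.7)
The plan is to build $\tilde C$ on $\R^2 \setminus \mathrm{eye}$ by extending $C$ inward via flow-transport from outside $B_R$. For $p \in \R^2 \setminus B_R$ set $\tilde C(p) = C(p) = \{v_xv_y > 0\}$. For $p \in B_R \setminus \mathrm{eye}$ whose backward orbit exits $B_R$ at time $-T(p)$ at a point $q(p) \in \partial B_R$, set
\[
\tilde C(p) = Df_{A, T(p)}(q(p)) \cdot C(q(p)),
\]
the pushforward of $C$ along the flow from the entry point. Continuity across the entering component of $\partial B_R$ is automatic, since $T(p) = 0$ there. The only points in $B_R \setminus \mathrm{eye}$ whose backward orbit never exits $B_R$ lie on the stable separatrices of the saddles $x_h$ and $x_h'$; across them I would extend $\tilde C$ by the limit of the pushed-forward cones, which degenerates onto the unstable eigenline of the corresponding saddle.

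To verify strict contraction of $\tilde C$ by $f_{A, t}$, I first observe that outside $B_R$ the flow agrees with the linear saddle $f_{0, t}$. In coordinates where the unstable direction is $(1, 1)$, the derivative of $f_{0, t}$ is $\bigl(\begin{smallmatrix} \cosh 2t & \sinh 2t \\ \sinh 2t & \cosh 2t \end{smallmatrix}\bigr)$, and the direct check $(1, 0) \mapsto (\cosh 2t, \sinh 2t)$ shows it sends $\overline C$ strictly into $C$ for any $t > 0$. During a transit inside $B_R$, $\tilde C$ is by construction the pushforward of $C(q(p))$ and so is preserved by the flow. Upon crossing the exiting portion of $\partial B_R$, the cone jumps from the interior value $\tilde C^{-}(\mathrm{exit}) = Df_A^{\mathrm{ret}, R}(\mathrm{entry}) \cdot C(\mathrm{entry})$ to $C(\mathrm{exit})$; by \Cref{lem:cone} the former sits strictly inside the latter, producing strict contraction across the jump. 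Combining these pieces yields strict contraction of $\tilde C$ by $f_{A, t}$ along every orbit which exits $B_R$ within time $t$.

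The main obstacle will be orbits that remain inside $B_R$ for arbitrarily long times, namely those approaching the stable separatrices; for these the above construction only gives strict preservation of $\tilde C$, not strict contraction. To remedy this I would refine the interior definition: let $\tilde C(p)$ be the cone of half-angle $\theta_0 e^{-\eta T(p)}$ about the pushed-forward axis $Df_{A, T(p)}(q(p)) \cdot (1, 1)$, for a small rate $\eta > 0$ and $\theta_0 = \pi/4$. This exponential shrinking forces pointwise strict contraction inside $B_R$, remains continuous across the entering $\partial B_R$ (where $T = 0$ and the cone reduces to $C$), is strictly contained in $C$ at the exiting $\partial B_R$ by \Cref{lem:cone}, and collapses continuously to the unstable eigenline along the separatrices. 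Thus the desired global strict contraction on $\R^2 \setminus \mathrm{eye}$ reduces to the return-map contraction already supplied by \Cref{lem:cone}.
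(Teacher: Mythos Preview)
Your initial construction—pushing $C$ forward from $\partial B_R$ into the ball along the flow—matches the paper's approach, and you correctly identify the two defects: the pure pushforward is merely preserved, not strictly contracted, and the field jumps where flowlines exit $B_R$. The problem is your remedy. You propose to \emph{shrink} the cones along flowlines (half-angle $\theta_0 e^{-\eta T(p)}$), but this goes in the wrong direction. Recall that strict contraction means $Df_{A,s}\bigl(\overline{\tilde C(p)}\bigr)\subset \interior\,\tilde C(f_{A,s}(p))$; it compares the \emph{image} of the cone at $p$ with the cone at $f_{A,s}(p)$. If you start from the flow-invariant pushforward and then make the target cone $\tilde C(f_{A,s}(p))$ \emph{smaller}, there is no reason for $Df_{A,s}(\tilde C(p))$ to land inside it. Your sentence ``exponential shrinking forces pointwise strict contraction'' conflates ``the cones are getting smaller along the orbit'' with ``the derivative sends each cone strictly into the next''; the first does not imply the second without an a priori estimate that $Df_{A,t}$ contracts angles toward the transported axis inside $B_R$—and that is exactly the hyperbolicity you are trying to establish, so the argument is circular. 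Shrinking also leaves the exit discontinuity in place (indeed enlarges the jump), which you do not address.

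The paper's fix is the opposite: \emph{slowly expand} the pushed-forward cones along flowlines inside $B_R$. Then $Df_{A,s}(\tilde C(p))$ coincides with the pushforward at $f_{A,s}(p)$, which by construction sits strictly inside the expanded $\tilde C(f_{A,s}(p))$; this is strict contraction. Since \cref{lem:cone} guarantees the pure pushforward at the exit point already lies strictly inside $C$, the expansion rate can be calibrated so that the interior cone opens up to exactly $C$ upon exit, simultaneously curing the discontinuity. Reversing the sign of your correction (expand rather than shrink) would make your argument go through.
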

\begin{proof}
Extend $C$ into $B_r$ by pushing it forward into $B_R$ by the flow $f_{A,t}$. By \cref{lem:cone2} we get a cone field which is contracted by the flow of $f_{A,t}$. The cone field suffers from two problems, first that the contraction is not strict, and second that it is not continuous where the flow exits $B_R$. These problems can be simultaneously remedied by a slow expansion of the cones along flowlines of $B_R$.
\end{proof}

It is possible to choose $g$ so that $H_1=a-b(x^2 + y^2)$ in a neighbourhood of the origin for some constants $a,b$, so that $f_1$ is exactly a counterclockwise rotation on a neighbourhood of the origin. Similarly, we could arrange that $f_{-1}$ is a clockwise rotation in a neighbourhood of the origin. With this choice, the lifts of $f_1$ or $f_{-1}$ to branched covers around the origin are still smooth. Because the entire construction has $\Z/2$ rotational symmetry around the origin, we may even take $n+\frac 1 2$-fold covers around the origin. Finally, we can post-compose with a rotation around the origin which preserves the Hamiltonian. This completes our construction of a smooth, area preserving blowup of a $k$-pronged pseudo-hyperbolic fixed point.
 
The entire construction above may also be interpreted as a blowup operation at a closed orbit of a flow. Define $\overline{f}_{A,t}: \R^2 \times S^1 \to \R^2 \times S^1$, by $\overline{f}_{A,t}(x,y,\theta) = (f_{A,t}(x,y),\theta+t)$. Then for each value of $A$, the other variable $t$ parametrizes a flow. We say that $A$ parametrizes an interpolation between the flow near a standard hyperbolic closed orbit and a blowup of the flow. We continue to refer to the suspension of the eye as the eye.

There is a second kind of blowup that can performed at a nonrotating $k$-prong singular fixed point. Such a singularity can be generically perturbed through area preserving maps to $k-2$ ordinary hyperbolic fixed points connected by a tree of saddle connections; we call this a \emph{nonrotating blowup}. See \cref{fig:blowup2}. To ensure that this map is area preserving, the perturbation can be done by modifying the corresponding monkey saddle Hamiltonian to one with nondegenerate critical points. See \cite[Lemma 3.4, Lemma 3.5]{cotton-clay.SymplecticFloerHomology} for a more detailed construction. This perturbation breaks rotational symmetry so, as in \cite{cotton-clay.SymplecticFloerHomology}, we may use it only at nonrotating singularities.

\begin{figure}
    \centering
    \includegraphics[width=4.5in]{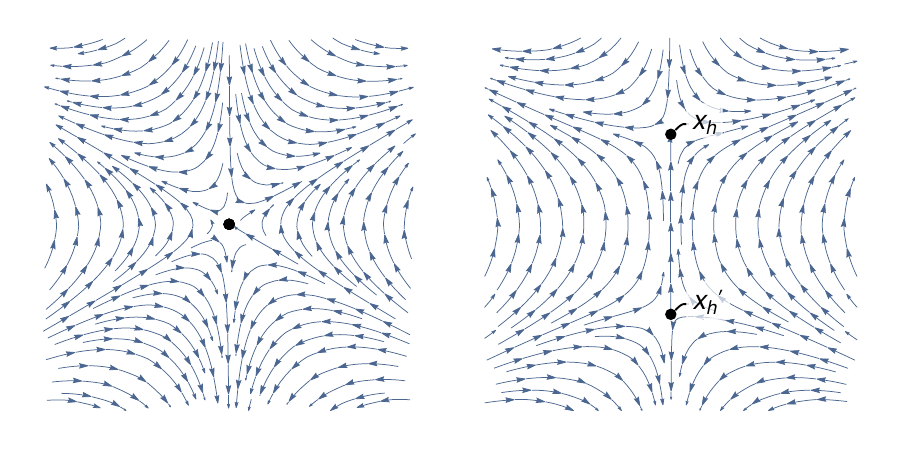}
    \caption{A nonrotating blowup}\label{fig:blowup2}
\end{figure}

An argument similar to the one in \cref{lem:cone} and \cref{lem:cone2} proves:

\begin{lem}\label{lem:cone3}Let $k\geq 3$. Let $\widetilde{f_{t}}:\R^2 \to \R^2$ be the standard model for a $k$-prong singular fixed point, obtained by taking a $(k-2)/2$-fold branched cover of $f_{0,t}$. Let $\widetilde C$ be the lift of the cone field $C$ from \cref{lem:cone}. Then for any $R>0$, there is a nonrotating blowup $\widetilde{f_t}'$ supported on $B_R$ such that there is a cone field $C'$ on the complement of the tree of saddle connections which agrees with $\widetilde C$ outside $B_R$ and is strictly contracted by the flow of $\widetilde{f_t}'$.
\end{lem}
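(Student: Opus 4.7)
The plan is to imitate the proofs of \cref{lem:cone} and \cref{lem:cone2} almost verbatim, with the lifted hyperbolic flow $\widetilde{f_t}$ taking the place of $f_{0,t}$ and the nonrotating blowup $\widetilde{f_t}'$ taking the place of the rotating blowup $f_{A,t}$. I would construct $\widetilde{f_t}'$ by modifying the monkey-saddle Hamiltonian that generates $\widetilde{f_t}$ inside a small ball $B_\varepsilon$ via the perturbation from \cite[Lemma 3.4, Lemma 3.5]{cotton-clay.SymplecticFloerHomology}, so as to produce $k-2$ nondegenerate hyperbolic critical points joined by a tree of saddle connections. The perturbing bump function should be chosen to satisfy the same scaling relation in $\varepsilon$ as the one used in \cref{subsec:blowup}, so that doubling $\varepsilon$ amounts to rescaling coordinates and multiplying the Hamiltonian by a constant. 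For $\varepsilon$ small enough the entire tree of saddle connections is then contained in $B_{10\varepsilon}$.

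For the analogue of \cref{lem:cone}: outside $B_{10\varepsilon}$ the flow of $\widetilde{f_t}'$ coincides with $\widetilde{f_t}$, which strictly contracts $\widetilde C$ because it is a branched cover of the linear saddle. For a trajectory entering $B_R$ at $p_0$, entering $B_{10\varepsilon}$ at $p_1$, exiting $B_{10\varepsilon}$ at $p_2$, and exiting $B_R$ at $p_3$, I would factor the derivative of the first return map to $B_R\setminus B_{10\varepsilon}$ into three pieces exactly as in equation (2) of \cref{lem:cone}: two linear hyperbolic stretches of times $t_1-t_0$ and $t_3-t_2$ sandwiching a middle factor over time $t_2-t_1$. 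Scale invariance bounds the middle factor in operator norm independently of $\varepsilon$, while the outer hyperbolic exponents diverge like $|\log\varepsilon|$ as $\varepsilon\to 0$. Hence for small $\varepsilon$ the return map strictly contracts $\widetilde C$. For the analogue of \cref{lem:cone2}, define $C'$ on the complement of the tree by pushing $\widetilde C$ forward along the flow from the exit locus on $\partial B_R$; every orbit not contained in the tree itself eventually exits $B_R$, so the pushforward is well defined. Slow expansion of cones along flowlines inside $B_R$ then upgrades weak to strict contraction and smooths the seam at $\partial B_R$.

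The main point to verify is that the nonrotating perturbation of the monkey saddle can be chosen to lift smoothly to the $(k-2)/2$-fold branched cover and to retain the $\varepsilon$-scaling. This should be automatic: the monkey saddle itself is the pullback of $x^2-y^2$ under the branched cover and is equivariant for the natural rotational symmetry at the origin, and the Cotton-Clay perturbation respects this symmetry, so the lift is smooth and its support can be concentrated in $B_\varepsilon$ in the equivariant way required. A secondary subtlety is that, unlike the eye of \cref{subsec:blowup} whose complement in $B_R$ is an annulus, the tree of saddle connections cuts $B_R$ into several sectors. This is harmless: the pushforward cone field is defined sector by sector, and on $\partial B_R$ every sector agrees with $\widetilde C$, so the sectors assemble into a single global cone field on the complement of the tree.
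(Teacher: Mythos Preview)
Your overall strategy is exactly what the paper intends: it states only that ``an argument similar to the one in \cref{lem:cone} and \cref{lem:cone2}'' proves the lemma, and your proposal is precisely that argument---scale invariance to bound the middle factor, the outer hyperbolic times going to infinity as $\varepsilon\to 0$, then pushforward of the cone field with a slow expansion to fix strictness and the seam.

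There is, however, a genuine confusion in your penultimate paragraph. You write that the Cotton--Clay perturbation ``respects this symmetry, so the lift is smooth.'' This is backwards on two counts. First, the perturbation is applied \emph{directly} to the monkey saddle Hamiltonian, which already lives on $\R^2$ (the branched cover); there is nothing to lift. Second, and more importantly, the nonrotating blowup explicitly \emph{breaks} the rotational symmetry---the paper says so just before the statement of \cref{lem:cone3}, and this is precisely why it may only be used at nonrotating singularities. Fortunately, none of your argument actually uses equivariance: once you recognise that the perturbation is a compactly supported modification of the monkey saddle Hamiltonian on $\R^2$ itself, smoothness is automatic and the scale-invariance can be arranged by taking the perturbation homogeneous of the same degree as the monkey saddle (so of the form $\varepsilon^{m}\,g(x/\varepsilon,y/\varepsilon)$ for the appropriate $m$, rather than $\varepsilon^2$ as in \cref{subsec:blowup}). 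Delete the equivariance claim and the paragraph stands.
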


\subsection{Transverse Dehn surgery}
Let $S^1\times D^2$ be a solid torus with coordinates $\psi$ on $S^1$ and $r,\theta$ on $D^2$. When discussing slopes on an $r=const$ torus, we say that the $\psi$ direction has slope $0$ and then $\theta$ direction has slope $1/0$.
The next lemma should be familiar from the standard picture of the Reeb flow near the binding of an open book decomposition.
\begin{figure}
    \centering
    \includegraphics[width=2.75in]{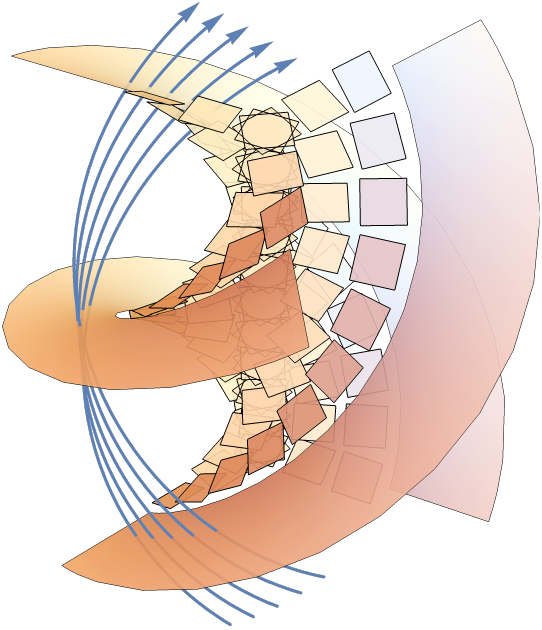}
    \caption{$\ker{\lambda}$ is a positive or negative contact structure near the binding of $\mathcal B$ and a foliation on the interior of the pages.}\label{fig:confoliation}
\end{figure}
\begin{lem}\label{lem:solidtorus}
For any choice of $p,q > 0$ and a smooth function $s:[0,1] \to \R_{\geq 0}$ which extends to a smooth even function around 0, there is a stable Hamiltonian structure $(\omega, \lambda)$ on $S^1 \times D^2$ satisfying
\begin{enumerate}
    \item its Reeb flow preserves concentric tori, is linear on each torus, and has slope $s(r)$ at radius $r$.
    \item $d\lambda=0$ for $r\in [1/2,1]$
    \item $\lambda = q d\theta + p d\psi$ for $r\in [1/2,1]$
    \item $\lambda \wedge d\lambda \geq 0$.
\end{enumerate}
\end{lem}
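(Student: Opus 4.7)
The plan is to construct $(\omega,\lambda)$ by an ansatz in which all data depend only on $r$. Set
\[
\lambda = \alpha(r)\,d\psi + \beta(r)\,d\theta, \qquad \omega = u(r)\bigl(s(r)\,d\psi - d\theta\bigr)\wedge dr
\]
for functions $\alpha,\beta,u$ to be chosen, with $u>0$. Then $d\omega=0$ is automatic, and any vector proportional to $\partial_\psi + s(r)\,\partial_\theta$ lies in $\ker\omega$; normalizing by $\lambda(R)=1$ forces the Reeb field to be $R = h(r)\bigl(\partial_\psi + s(r)\,\partial_\theta\bigr)$ with $h = 1/(\alpha + s\beta)$, so the Reeb flow automatically preserves the concentric tori and is linear with slope $s(r)$, giving property~(1). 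A direct calculation shows that the stability condition (equivalently $d\lambda = f\omega$ for some $f$) reduces to the single scalar relation $\alpha'(r) + s(r)\,\beta'(r) = 0$.

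To arrange (2) and (3), take $\alpha\equiv p$ and $\beta\equiv q$ on $[1/2,1]$, so that $d\lambda = 0$ and $\lambda = p\,d\psi + q\,d\theta$ there. On $[0,1/2]$ choose $\beta$ smooth and non-decreasing, vanishing to order at least two at $r=0$; for instance take $\beta(r) = q\,\chi(r^2)$ for a smooth cutoff $\chi:\R_{\geq 0}\to[0,1]$ equal to $0$ near $0$ and to $1$ on $[1/4,\infty)$. Define $\alpha(r) = p + \int_r^{1/2} s(u)\,\beta'(u)\,du$, which solves $\alpha' = -s\beta'$ with $\alpha(1/2)=p$. Since $s,\beta'\geq 0$, we have $\alpha \geq p > 0$ everywhere, so $\alpha + s\beta > 0$ and $h>0$ is well defined. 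Finally take $u(r) = r\,\tilde u(r^2)$ for a smooth positive even function $\tilde u$. A short computation then gives $\omega\wedge\lambda = (u/h)\,dr\wedge d\theta\wedge d\psi > 0$ and $\lambda\wedge d\lambda = \beta'\,(\alpha+s\beta)\,dr\wedge d\theta\wedge d\psi \geq 0$, which verify the remaining SHS conditions as well as property~(4).

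The main delicate point is smoothness at the core circle $\{r=0\}$, since $d\theta$ and $dr$ are singular in polar coordinates. Rewriting in Cartesian coordinates on $D^2$: the choice $\beta(r) = r^2\tilde\beta(r^2)$ gives $\beta\,d\theta = \tilde\beta(r^2)\,(x\,dy - y\,dx)$, and the choice $u(r) = r\,\tilde u(r^2)$ gives $u\,d\theta\wedge dr = -\tilde u(r^2)\,dx\wedge dy$ and $u\,d\psi\wedge dr = \tilde u(r^2)\,d\psi\wedge(x\,dx+y\,dy)$, both of which are smooth. The hypothesis that $s$ extends to a smooth even function about $0$ makes $\alpha$ a smooth function of $r^2$ as well, so $\omega$, $\lambda$, and $R$ all extend smoothly across the core, where $R = h(0)\,\partial_\psi$ makes the core circle a closed Reeb orbit. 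Once these smoothness obligations are pinned down, the rest of the verification is formal.
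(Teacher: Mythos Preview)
Your proof is correct and follows essentially the same approach as the paper: both set $\lambda=\alpha(r)\,d\psi+\beta(r)\,d\theta$ with $\beta$ a monotone cutoff and $\alpha$ determined by the ODE $\alpha'=-s\beta'$, and both take $\omega$ to be a cylindrically symmetric closed $2$-form annihilated by $\partial_\psi+s(r)\partial_\theta$. The only cosmetic differences are that the paper writes $\omega=\iota_R V$ for a cylindrically symmetric volume form rather than your explicit formula, and takes $\beta(r)=r^2$ near the core rather than $\beta\equiv 0$; your more detailed check of smoothness at $r=0$ is a welcome addition, since the paper asserts but does not verify that its $g$ extends to a smooth even function.
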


\begin{proof}
Fix $f(r)$ satisfying
\begin{enumerate}
    \item $f' \geq 0$
    \item $f(r)=r^2$ near $r=0$ 
    \item $f(r)=q$ for $r\in [1/2,1]$
\end{enumerate}

Define $g(r)$ by $g(r) = p+\int_r^1 f'(x)s(x)dx$. Then $g$ satisfies
\begin{enumerate}
    \item $g'(r)=-f'(r)s(r) \leq 0$
    \item $g(r)=p$ for $r\in [1/2,1]$
    \item $g(r) > 0$
    \item $g(r)$ extends to an even smooth function in a neighbourhood of $r=0$
\end{enumerate}
    Consider the 1-form $\lambda=f(r) d\theta + g(r)d\psi$. This 1-form satisfies the second and third conditions in the statement of the lemma. By construction, $f$ and $g$ extend to a smooth even functions near $r=0$. Therefore, $\lambda$ is smooth even at $r=0$. Since $f$ is non-negative and $g$ is positive, $\lambda$ evaluates positively on positive slopes. Therefore, there is a vector field $R$ which is linear on concentric tori, has slope $s(r)$ at radius $r$, and is normalized with $\lambda(R)=1$. Since $f$ is non-decreasing and $g$ is non-increasing, $\ker \lambda$ is a confoliation. We have $d\lambda = f'(r)dr \wedge d\theta + g'(r) dr \wedge d\psi$. Since $s(r) = -g'(r)/f'(r)$, we have $R\subset \ker(d\lambda)$. Let $V$ be any cylindrically symmetric volume form. Now let $\omega = \iota_R V$. Then $(\omega, \lambda)$ is an SHS with Reeb vector field $R$.
\end{proof}

\begin{constr}[Transverse Dehn surgery] \label{constr:surgery}
Let $M$ be a 3-manifold with a stable Hamiltonian structure having a region stable Hamiltonian equivalent to $S^1\times D^2$ with $\lambda = d\psi$ and $\omega$ dual to the suspension of a flow which preserves concentric circles and rotates clockwise. Then for $p,q > 0$, we can perform a $p/q$ Dehn surgery producing a new SHS. Drill out a solid torus and glue in the one constructed in \cref{lem:solidtorus}. That lemma allows the flexibility to match both the Reeb flow and $\lambda$ on the overlapping $T^2 \times [0,1]$. We can match $\omega$ on the overlap because scaling $\omega$ by any function which depends only on $r$ results in a new SHS. Similarly, if the suspension flow rotates counterclockwise, we can do a $-p/q$ Dehn surgery using the mirror image of the solid torus form \cref{lem:solidtorus}.
    
\end{constr}

\subsection{The main construction} Now let us explain \cref{constr:main}, how to go from a pseudo-Anosov flow to a stable Hamiltonian structure. Let $\phi$ be a transitive pseudo-Anosov flow. Choose a Birkhoff section $S^\circ$ for $\phi$ whose boundary orbits are non-singular. Let $\gamma_1,\dots,\gamma_n$ be the boundary components of $S^\circ$. Each $\gamma_i$ has a sign $s_i=\pm 1$ indicating whether the orientation induced as a boundary component of $S^\circ$ agrees or disagrees with the orientation of the flow.

Let $S$ be the closed surface obtained by collapsing each boundary component $\gamma_i \subset \partial S^\circ$ to a point which we call $x_i$. Let $\varphi^\circ:\interior(S^\circ) \to \interior( S^\circ)$ be the first return map to $S^\circ$ and let $\varphi:S\to S$ be the induced pseudo-Anosov homeomorphism of $S$. Now we would like to perform a clockwise or counterclockwise blowup operation on all the singular points of $\varphi$ as well as $x_1,\dots,x_n$. When $s_i=1$, we will perform a clockwise blowup at $x_i$, and when $s_i=-1$, we will perform a counterclockwise blowup. At the nonrotating singularities, perform a nonrotating blowup. At rotated singularities, perform either a clockwise or counterclockwise blowup. Call the blownup map $ \varphi_{blowup}$, and let $\varphi_{blowup}^\circ$ be the restriction of $\varphi_{blowup}$ to the complement of the eyes and the saddle connection trees.

\begin{prop}
    There is a choice of blowup parameters so that $\varphi_{blowup}$ is semiconjugate to $\varphi$ by the map which blows down all the eyes and saddle connection trees.
\end{prop}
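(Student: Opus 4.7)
My plan is to build the semiconjugacy $\pi:S\to S$ region by region: outside small disks around the blown-up points, $\pi$ will be the identity (where the two maps already agree); inside each disk it will collapse the eye or saddle-connection tree to the corresponding point and otherwise be chosen to conjugate the annular dynamics to the original local hyperbolic model. First I would handle the parameter choice. Each blown-up point (a marked point $x_i$ or a singular point of $\varphi$) lies on a finite $\varphi$-orbit; on each orbit I would pick a smooth local chart and blowup model at one representative point, and then transport to the remaining orbit points by $\varphi$ itself, so that local blowup structures are carried consistently by the global dynamics. The sign of $A$ is dictated by $s_i$ at each $x_i$ and by the direction of the monodromy rotation at each rotated singularity, while the nonrotating blowup of \cref{lem:cone3} is used at each nonrotating singularity. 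Finally I would shrink $\varepsilon$ uniformly so that the disks $B_\varepsilon(p)$ are pairwise disjoint and the hypotheses of \cref{lem:cone}--\cref{lem:cone3} all hold.

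With these choices each eye (resp.\ saddle-connection tree) is closed and $\varphi_{blowup}$-invariant, since its boundary consists of stable and unstable separatrices of the new hyperbolic fixed points produced by the blowup; moreover $\varphi_{blowup}$ permutes these invariant sets in exactly the same way that $\varphi$ permutes the blown-up points. I would then define $\pi$: outside $\bigcup_p B_\varepsilon(p)$, set $\pi$ to be the identity (which is consistent with the collapse because $\varphi_{blowup}=\varphi$ there); on each eye or tree, set $\pi$ to be the constant map to the corresponding blow-up point; and on each annular region $B_\varepsilon(p)\setminus(\text{eye or tree})$, define $\pi$ to be a homeomorphism onto $B_\varepsilon(p)\setminus\{p\}$ that extends the identity from $\partial B_\varepsilon(p)$, fits continuously with the inner collapse, and conjugates $\varphi_{blowup}$ to $\varphi$. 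With this, the relation $\pi\circ\varphi_{blowup}=\varphi\circ\pi$ is immediate on each of the three pieces.

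The hard part will be constructing the conjugating homeomorphism in the annular region. The contracting cone fields from \cref{lem:cone2} and \cref{lem:cone3} guarantee that $\varphi_{blowup}$ is topologically hyperbolic on the complement of the eye or tree, with well-defined stable and unstable foliations extending from the new hyperbolic fixed points out to $\partial B_\varepsilon(p)$, and matching those of $\varphi$ on that boundary circle since the two maps coincide there. Pairing leaves of the two stable foliations and the two unstable foliations on either side of $\pi$ and then extending equivariantly should yield the required homeomorphism with the prescribed boundary behavior. The consistency across a $\varphi$-orbit secured in the first step is what makes this leaf-by-leaf construction coherent for orbits of length greater than one, rather than only for genuine fixed points of $\varphi$.
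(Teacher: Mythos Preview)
Your plan has a real gap at the step where you set $\pi$ equal to the identity outside the blowup disks. The semiconjugacy relation $\pi\circ\varphi_{blowup}=\varphi\circ\pi$ is a global constraint on orbits, not a pointwise one: if $x$ lies on $\partial B_\varepsilon(p)$, then iterating the relation forces $\pi(\varphi_{blowup}^k(x))=\varphi^k(x)$ for every $k$, and in particular the first iterate at which the $\varphi_{blowup}$-orbit exits the disk must coincide with the corresponding $\varphi$-iterate. But \cref{lem:cone} only tells you that the first return map of the blown-up flow to $\partial B_R$ contracts the same cone field as the original; it does \emph{not} say the two return maps agree. They do not, and so an orbit entering and exiting the disk lands at different points under $\varphi$ and $\varphi_{blowup}$, which is incompatible with $\pi=\mathrm{id}$ on the exterior. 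The same issue undercuts your assertion that the stable and unstable foliations of $\varphi_{blowup}$ ``match those of $\varphi$'' on $\partial B_\varepsilon(p)$: these foliations are determined by the entire forward and backward dynamics, which differ once an orbit crosses a disk, so there is no reason for the leaves to line up even where the maps agree pointwise.

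The paper avoids this by not attempting a local construction at all. The cone fields from \cref{lem:cone2} and \cref{lem:cone3} are used only to certify that $\varphi_{blowup}^\circ$ is uniformly hyperbolic on the complement of the eyes and trees; since it is visibly in the same mapping class as $\varphi^\circ$, one then invokes the uniqueness (up to topological conjugacy) of the hyperbolic representative in a mapping class. The conjugacy produced this way is a global homeomorphism, generally equal to the identity nowhere, and it extends over the eyes and trees to give the desired semiconjugacy. If you want to salvage a hands-on construction, you would need to build $\pi$ globally by matching the two pairs of invariant foliations on all of $S$ minus the collapsed sets, not disk by disk.
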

\begin{proof}
    Use \cref{lem:cone2} and \cref{lem:cone3} to select blowups with very small support. With this choice, there is a cone field $C$ on the complement of all the eyes and saddle connection trees which is strictly contracted by $\varphi_{blowup}$. It follows that $\varphi_{blowup}^\circ$ is a hyperbolic diffeomorphism in the same mapping class as $\varphi^\circ$. There is at most one hyperbolic diffeomorphism up to topological conjugacy in each mapping class, so $\varphi_{blowup}^\circ$ and $\varphi^\circ$ are topologically conjugate. The proposition follows.
\end{proof}

Since $\varphi_{blowup}$ is a smooth area preserving diffeomorphism, the mapping torus of $\varphi_{blowup}$ naturally carries a stable Hamiltonian structure whose Reeb flow has first return map equal to $\varphi_{blowup}$. Our choice of the signs $s_i$ ensures that we can do the transverse Dehn surgery from \cref{constr:surgery} to return from the mapping torus of $\varphi_{blowup}$ to $M$. We usually denote the resulting stable Hamiltonian structure $(\omega_\phi, \lambda_\phi)$, or $(\omega_{\phi,\mathcal B}, \lambda_{\phi,\mathcal B})$ when we want to specify which open book decomposition $\mathcal B$ was used in the construction.

\begin{rmk}[Smooth invariant volume forms]
        Given a transitive Anosov flow, Asaoka solved the tricky problem of finding an orbit equivalent flow which preserves a smooth volume form \cite{asaoka.InvariantVolumesCodimensionone}. This technique is key in Marty's proof that skew-$\R$-covered flows are orbit equivalent to Reeb flows \cite{marty.SkewedAnosovFlows}. Our blowups, though aesthetically unappealing, help us to skirt this issue. We start with a suspension flow on the mapping torus of $\varphi_{blowup}$, where the smooth volume form is unique and explicit. Then, instead of doing Fried surgery which does not preserve smoothness of the volume form, we do a Dehn surgery in the eye which is a smooth operation. The cost is that the smooth structure on the manifold depends on the choice of initial Birkhoff section. But smooth structures on 3-manifolds are unique up to homeomorphism, and Birkhoff sections are always smoothable, so we can transfer the flow and Birkhoff section back to the original smooth manifold.
\end{rmk}

\subsection{Some stable homotopies}
Let us catalogue some different stable homotopies that will be useful later.
\begin{enumerate}[label=Type \arabic*]
    \item\label{item:type1} If $\lambda_1$ and $\lambda_2$ are any two stabilizing 1-forms for $\omega$, then so is any linear combination of $\lambda_1$ and $\lambda_2$. Therefore, there is an exact stable homotopy between $(\omega, \lambda_1)$ and $(\omega,\lambda_2)$.
    \item Suppose $(\omega_1, \lambda)$ and $(\omega_2, \lambda)$ are two SHS with the same $\lambda$. Suppose $\omega_1=\omega_2$ except on the integrable region (that is, the region where $d\lambda =0$). Then every linear combination of $\omega_1$ and $\omega_2$ is stabilized by $\lambda$, so $(\omega_1,\lambda)$ and $(\omega_2, \lambda)$ are stable homotopic by a linear interpolation.
    \item \label{item:type3} Suppose we have a solid torus of radius $R$ with a cylindrically symmetric SHS. Suppose we have a non-negative function on the solid torus depending only on radius, $h(r):[0,R]\to \R_{\geq 0}$, satisfying $h(r)=1$ in a neighbourhood of $r=R$. Then we can scale $\omega$ by $h(r)$. The homology class $[\omega^*]$ changes by a multiple of $[\gamma]$ where $\gamma$ is the closed orbit at the centre of the tube.
    \item\label{item:type4} Suppose we have a solid torus with an SHS satisfying $\lambda =d\psi$, where $\psi$ parameterizes the $S^1$ direction. Then the Reeb flow moves at constant speed in the $\psi$ direction, and so is determined by the first return map to a $D^2$ slice. The first return map together with an invariant area form on $D^2$ determine the SHS. As in type 2, any two such SHS agreeing in a neighbourhood of the boundary of the solid torus are stable homotopic. If the invariant area forms on a $D^2$ slice are the same, then the stable homotopy is exact.
\end{enumerate}

\begin{subsection}{Exact vs possibly-non-exact stable homotopy}\label{sec:exact-non-exact}
    $(\omega_\phi, \lambda_\phi)$ is not unique up to exact stable homotopy because exact stable homotopy does not change the cohomology class of $\omega_\phi$, and this cohomology class is not uniquely specified by $\phi$. However, after fixing a choice of $[\omega_\phi]$, the stable Hamiltonian structure $(\omega_\phi, \lambda_\phi)$ is unique up to exact stable homotopy. The simplest example of this phenomenon is the case of a suspension pseudo-Anosov flow. There are in general many fibrations transverse to $\phi$ catalogued by points of the corresponding fibered face of the Thurston norm ball. Each fibration gives rise to a different stable Hamiltonian structure where $\omega$ is a smooth invariant measure for the monodromy and $\ker \lambda$ is the fibration. Let us state a more precise version of \cref{constr:main} which takes into account this choice. Given a pseudo-Anosov flow $\phi$, let $C_\phi\subseteq H_1(M,\R)$ be the closed cone generated by periodic orbits of $\phi$. We call $C_\phi$ the \emph{cone of homology directions} of $\phi$. Any $\phi$-invariant measure gives a class in $H_1(M,\R)$. Any such class is contained in $C_\phi$, and moreover, periodic orbits are dense in the projectivization of $C_\phi$ \cite{plante.HomologyClosedOrbits}. We denote the Poincar\'e dual of $\omega$ by $\omega^*$. 

    \begin{constr}\label{constr:main2}
        Let $x$ be a point on the interior of the cone of homology directions of $\phi$ in $H_1(M,\R)$. Then one can blow up finitely many orbits of $\phi$ to obtain the Reeb flow of a stable Hamiltonian structure $(\omega_\phi, \lambda_\phi)$ with $\omega_\phi^*=x$. Different choices of blowup give rise to stable Hamiltonian structures which are exact stable homotopic.
    \end{constr}
    
    \begin{rmk}\label{rmk:nonexact}
    The main reason for restricting to exact stable homotopies in \cref{thm:main} is to get invariance of symplectic field theory. The usual continuation map argument requires a topologically trivial symplectic cobordism, which can only exist when the cohomology class of $\omega$ does not change. However, one can sometimes use a bifurcation argument instead of a continuation map argument to prove invariance. For example, in the case of atoroidal fibered 3-manifolds, Cotton--Clay explains that cylindrical SFT is invariant even under non-exact homotopies which preserve the fibration \cite[Section 2]{cotton-clay.SymplecticFloerHomology}. He uses Lee's bifurcation analysis which classifies how a closed orbit can bifurcate during a 1-parameter family of possibly non-exact deformations. The atoroidal condition prohibits holomorphic cylinders from a closed orbit to itself. Thus, there is some hope to remove the exactness condition in \cref{thm:main}, at least in the atoroidal setting.
    \end{rmk}
\end{subsection}

First we prove the existence part of \cref{constr:main2}.

\begin{lem}
    For any $x\in int \, C_\phi$, there is a choice of blowup parameters so that $\omega_\phi^* = x$.
\end{lem}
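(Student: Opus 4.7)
My plan is to combine Plante's density of closed orbit classes in $C_\phi$ (cited just above the statement) with the Type 3 stable homotopy, which scales $\omega$ radially inside a tube around a blown-up orbit $\gamma$ and thereby shifts $\omega_\phi^*$ by a real multiple of $[\gamma]$. First, I would invoke \cite{plante.HomologyClosedOrbits} to select non-singular periodic orbits $\gamma_1,\ldots,\gamma_n$ of $\phi$ whose homology classes $[\gamma_i]$ positively span an open cone in $H_1(M,\R)$ containing $x$ in its interior. I would then use \Cref{prop:avoid} to find a Birkhoff section $S$ whose boundary is disjoint from $\{\gamma_1,\ldots,\gamma_n\}$, and carry out the main construction of \cref{subsec:blowup} while additionally blowing up each $\gamma_i$. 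Each $\gamma_i$ meets $\mathrm{int}\,S$ in a periodic orbit of $\varphi$, and I apply the local blowup model of \cref{subsec:blowup} at each point of this orbit, producing $\gamma_i$ as a closed Reeb orbit of $(\omega_\phi,\lambda_\phi)$ sitting at the center of a cylindrically symmetric solid tube.

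Next, in each such tube I would apply a Type 3 stable homotopy, scaling $\omega$ radially by a non-negative function $h_i(r)$ equal to $1$ near the tube boundary. This shifts $\omega_\phi^*$ by a real multiple of $[\gamma_i]$ equal to the net change of the flux of $\omega$ through a transverse disk, and by choice of $h_i$ this flux can be any prescribed positive real number. To control the ``background'' contribution $y_0$ to $\omega_\phi^*$ coming from outside the tubes, I would also use a Type 2 stable homotopy on the integrable region of $(\omega_\phi,\lambda_\phi)$ (which covers essentially all of the mapping-torus part of $M$ away from the surgery tubes), scaling the restriction of $\omega$ there to drive $y_0$ to an arbitrarily small class.

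Assembling everything, $\omega_\phi^*$ takes the form $y_0+\sum_i c_i[\gamma_i]$ with each $c_i>0$ freely prescribed and $\|y_0\|$ as small as desired. Since $x$ lies in the interior of the positive cone spanned by the $[\gamma_i]$, for $y_0$ small enough the difference $x-y_0$ also lies in that interior and can therefore be written as $\sum c_i[\gamma_i]$ with all $c_i>0$; choosing the $h_i$ to realize these fluxes yields $\omega_\phi^*=x$. The main obstacle I expect is verifying the Type 2 claim: that the integrable region of the SHS is flexible enough to drive $y_0$ to an arbitrarily small class. This reduces to a statement about the fibered structure of $\ker\lambda$ there, which should be routine since on a fibration one can scale any invariant 2-form by a positive function without affecting the SHS axioms.
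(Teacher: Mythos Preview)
Your overall strategy matches the paper's: choose closed orbits $\gamma_i$ whose classes span an open cone containing $x$, blow them up, shrink the ``background'' contribution to $\omega_\phi^*$, and then use \ref{item:type3} moves inside the eyes to add positive multiples of $[\gamma_i]$. The one genuine gap is exactly where you flagged it. The Type~2 move you propose does not work as stated: if you scale $\omega$ by a positive function $h$ on the integrable region, closedness forces $d(h\omega)=dh\wedge\omega=(R\cdot h)\,\omega\wedge\lambda=0$, i.e.\ $h$ must be constant along Reeb orbits. But outside the eyes the Reeb flow is orbit equivalent to the original transitive pseudo-Anosov flow, so the only continuous $R$-invariant function is a constant. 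Hence you cannot taper $h$ from a small value on the bulk of the page to $1$ near the non-integrable region while keeping $h\omega$ closed; Type~2 alone cannot shrink $y_0$.

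The paper's remedy is simpler than what you attempt: scale $\omega_\phi$ \emph{globally} by a small positive constant $c$. This is manifestly still an SHS and lies within the freedom of the construction (the invariant area form on the page is only specified up to scale). Then $\omega_\phi^*$ becomes $c\,\omega_\phi^*$, and since $x$ lies in the interior of the cone on the $[\gamma_i]$, for $c$ small enough $x-c\,\omega_\phi^*$ lies there too. Now the \ref{item:type3} scalings in the eyes around the $\gamma_i$ supply the required positive combination of $[\gamma_i]$ to hit $x$. With this substitution for your Type~2 step, your argument is correct and coincides with the paper's.
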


\begin{proof}
Start with any $(\omega_\phi, \lambda_\phi)$ as constructed above. Find a collection of nonsingular closed orbits $\gamma_1, \dots, \gamma_k$ such that $x$ is on the interior of the convex cone generated by $[\gamma_1], \dots, [\gamma_k]$; we can do this because closed orbits are dense in the projectivization of $C_\phi$. Perform a clockwise or counterclockwise blowup on these orbits if they are not already blown up. Now we will arrange that most of the mass of $\omega \wedge \lambda$ lives inside the eyes of $\gamma_1,\dots,\gamma_n$. For small enough choice of the blowup parameter $\varepsilon$, $[\omega_\phi^*]$ is still on the interior of the cone generated by $[\gamma_1],\dots, [\gamma_k]$. Now scale down $\omega_\phi$ by a very small constant factor so that $x-\omega_\phi^*$ is on the interior of the convex cone generated by the $[\gamma_i]'s$. Finally, we can correct for the difference between $x$ and $\omega_\phi^*$ by adding a multiple of $[\gamma_i]$ to $[\omega_\phi^*]$ for each $i$. This can be achieved by scaling $\omega_\phi$ inside the corresponding eye as in a \ref{item:type3} stable homotopy.
\end{proof}

We finish by proving the uniqueness statements in \cref{constr:main} and \cref{constr:main2}.

\begin{lem}
Let $\mathcal B$ and $\mathcal B'$ be any two signed open book decompositions of $M$ obtained from Birkhoff sections of $\phi$, and let $(\omega_{\phi,\mathcal B}, \lambda_{\phi,\mathcal B})$ and let  $(\omega_{\phi, \mathcal B'}, \lambda_{\phi, \mathcal B'})$ be stable Hamiltonian structures built using \cref{constr:main}. Then $(\omega_{\phi,\mathcal B}, \lambda_{\phi,\mathcal B})$ can be made compatible with $\mathcal B'$ by a possibly-non-exact stable homotopy. Moreover, $(\omega_{\phi, \mathcal B}, \lambda_{\phi,\mathcal B})$ is possibly-non-exact stable homotopic to $(\omega_{\phi, \mathcal B'}, \lambda_{\phi, \mathcal B'})$. If in addition $[\omega_{\phi, \mathcal B}]=[\omega_{\phi,\mathcal B'}]$, then the stable homotopies can be made exact.
\end{lem}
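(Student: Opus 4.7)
The plan is to reduce the lemma to \cref{prop:openbook}. I will stably homotope $(\omega_{\phi,\mathcal B},\lambda_{\phi,\mathcal B})$ until it becomes compatible with $\mathcal B'$, at which point \cref{prop:openbook} applied between this deformed SHS and $(\omega_{\phi,\mathcal B'},\lambda_{\phi,\mathcal B'})$ gives the desired stable homotopy. First I use \cref{prop:avoid} to reduce to the case that the binding $L'$ of $\mathcal B'$ is disjoint from the binding $L$ of $\mathcal B$ and from all singular orbits of $\phi$: if not, I insert an intermediate Birkhoff section $\mathcal B''$ whose binding avoids $L\cup L'\cup\{\text{singular orbits}\}$, and route the argument through $(\omega_{\phi,\mathcal B''},\lambda_{\phi,\mathcal B''})$. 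I then shrink the eyes of $(\omega_{\phi,\mathcal B},\lambda_{\phi,\mathcal B})$ by smoothly decreasing the blowup parameter $\varepsilon$ from \cref{constr:main}. This gives a continuous 1-parameter family of SHS, hence a stable homotopy, and it preserves $[\omega]$ because the cohomology class varies continuously in the finite-dimensional space $H^2(M;\R)$. I shrink until each eye fits inside a tubular neighborhood of its central orbit so thin that every page of $\mathcal B'$ meets the eye only in a disjoint union of small transverse disks, one per transverse intersection of the page with the central orbit in $\phi$; this is possible since $L' \cap L=\emptyset$ and the pages of $\mathcal B'$ meet orbits of $L$ transversely.

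Now I verify compatibility. The binding $L'$ lies outside the (now small) eyes, and the Reeb flow there is a positive reparametrization of $\phi$ via the semiconjugacy of \cref{constr:main}, so the orbits of $L'$ are closed Reeb orbits whose orientations match the signs in $\mathcal B'$. Transversality of the pages of $\mathcal B'$ to the Reeb flow holds outside the eyes by the same reparametrization. Inside each eye, in the coordinates $(\psi,r,\theta)$ of \cref{lem:solidtorus}, a page appears as a small disk with constant $\psi$, which is transverse to the Reeb vector $\partial_\psi + s(r)\partial_\theta$ because the latter has nonzero $\partial_\psi$ component. It remains to show that every Reeb orbit meets pages of $\mathcal B'$ in forward and backward time. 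Orbits outside the eyes inherit this from $\mathcal B'$ being a Birkhoff section for $\phi$; the central orbit of each eye lies in $L\setminus L'$, and is crossed transversely by pages of $\mathcal B'$ by construction; and every remaining Reeb orbit lies on a concentric torus with slope $s(r)$, a finite real number, whereas the intersection of a page-disk with the same torus is a curve of slope $1/0$. Since the two slopes differ, the linear Reeb flow on the torus must intersect the page-arcs in both time directions.

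With compatibility established, \cref{prop:openbook} produces a possibly-non-exact stable homotopy from the deformed $(\omega_{\phi,\mathcal B},\lambda_{\phi,\mathcal B})$ to $(\omega_{\phi,\mathcal B'},\lambda_{\phi,\mathcal B'})$. For the exactness statement, the eye-shrinking step preserves $[\omega]$, so when $[\omega_{\phi,\mathcal B}]=[\omega_{\phi,\mathcal B'}]$ the exact clause of \cref{prop:openbook} applies, and concatenation yields an exact stable homotopy overall. The main obstacle I anticipate is the orbit-hitting verification inside the eyes of $\mathcal B$: the Reeb flow there has ``extra'' closed orbits on concentric tori with rational $s(r)$ that do not correspond to orbits of $\phi$, so the argument does not reduce to properties of $\mathcal B'$ as a Birkhoff section for $\phi$ alone, and one must really exploit the slope mismatch on each concentric torus.
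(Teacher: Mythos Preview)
Your overall strategy matches the paper's: reduce to disjoint bindings via an auxiliary $\mathcal B''$ (\cref{prop:avoid}), establish compatibility of $(\omega_{\phi,\mathcal B},\lambda_{\phi,\mathcal B})$ with $\mathcal B'$, and invoke \cref{prop:openbook}. Your compatibility check inside the eyes is considerably more careful than the paper's one-line ``obvious''; the paper simply asserts that once the bindings are disjoint the blowups sit on the interior of the pages of $\mathcal B'$ and compatibility is automatic, without discussing the new Reeb orbits on concentric tori that you correctly flag.

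There is, however, a gap in your exactness argument. You write that the eye-shrinking step preserves $[\omega]$ ``because the cohomology class varies continuously in the finite-dimensional space $H^2(M;\R)$.'' Continuity of $t\mapsto[\omega_t]$ does not imply constancy, and indeed the lemma immediately preceding this one in the paper shows that $[\omega_\phi^*]$ genuinely depends on the blowup parameters. To repair this you should argue differently: on the mapping-torus region the $\varepsilon$-family is a \ref{item:type4} homotopy in which the invariant area form on each $D^2$ slice is unchanged (the blowup family is Hamiltonian), hence exact by the criterion stated there; the Dehn-surgery solid tori then require a separate check, and any residual drift of $[\omega^*]$ lies in the span of the core classes $[\gamma_i]$ and can be absorbed by a \ref{item:type3} correction. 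The paper sidesteps this issue by not shrinking at all: it takes compatibility for granted with the blowup parameters already in hand, so \cref{prop:openbook} is applied directly with the original cohomology classes intact.
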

\begin{proof}
    This is obvious when the binding components of $\mathcal B'$ are disjoint from the binding components of $\mathcal B$; in this case, the blowups used in the construction of $(\omega_{\phi,\mathcal B}, \lambda_{\phi,\mathcal B})$ occur on the interior of the pages of $\mathcal B'$, and $\mathcal B'$ remains compatible with the Reeb flow of $(\omega_{\phi,\mathcal B}, \lambda_{\phi,\mathcal B})$. By \cref{prop:openbook}, $(\omega_{\phi, \mathcal B}, \lambda_{\phi,\mathcal B})$ is stable homotopic to $(\omega_{\phi, \mathcal B'}, \lambda_{\phi, \mathcal B'})$ and the only obstruction to an exact stable homotopy is the cohomolgy class of $\omega$.
    
    If $\mathcal B$ and $\mathcal B'$ do share binding components, then it is possible that the orbits in the eyes of $(\omega_{\phi,\mathcal B}, \lambda_{\phi,\mathcal B})$ are not positively transverse to the pages of $\mathcal B'$. With some work, this can be fixed by an exact homotopy in the eye similar to \ref{item:type4}, but a lazier way around this problem is to simply pass through a third Birkhoff section $\mathcal B''$ which shares no binding components with $\mathcal B$ or $\mathcal B'$. Such a Birkhoff section exists by \cref{prop:avoid}.
\end{proof}

\end{section}

\begin{section}{Proof of \cref{thm:main}}
\begin{subsection}{Closed orbits of $(\omega_\phi, \lambda_\phi)$}
\begin{thm}[{\cite[Theorem 1.3]{barthelme.fenley.ea.AnosovFlowsSame},\cite[Theorem 1.1, Proposition 1.1]{barthelme.frankel.ea.OrbitEquivalencesPseudoAnosov}}]\label{thm:reconstruction} Let $\phi$ be a transitive pseudo-Anosov flow on a hyperbolic 3-manifold. Up to orbit equivalence isotopic to the identity, $\phi$ is determined by the set of conjugacy classes  $[g] \subset \pi_1(M)$ such that either $g$ or $g^{-1}$ is represented by a closed orbit of $\phi$. If $M$ is toroidal, then $\phi$ is determined up to a finite ambiguity.
\end{thm}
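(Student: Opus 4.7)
The plan is to reduce to reconstructing the bifoliated orbit space of $\phi$ from the algebraic data $\mathcal C$ (the set of conjugacy classes of closed orbits, together with their inverses), and then to appeal to a reconstruction principle saying that the $\pi_1(M)$-equivariant bifoliated plane determines $\phi$ up to orbit equivalence isotopic to the identity.

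The starting point is Fenley's theorem that the orbit space $\mathcal O_\phi$ of the lifted flow $\widetilde \phi$ on $\widetilde M$ is homeomorphic to $\R^2$ and carries two $\pi_1(M)$-invariant singular one-dimensional foliations $\F^s_\phi, \F^u_\phi$, namely the projections of the weak stable and unstable foliations of $\phi$. A principle due to Barbot in the Anosov setting and extended to pseudo-Anosov flows says that two transitive pseudo-Anosov flows on $M$ are orbit equivalent by a homeomorphism isotopic to the identity if and only if there is a $\pi_1(M)$-equivariant homeomorphism between their orbit spaces preserving the bifoliations; for hyperbolic $M$, the ``isotopic to the identity'' refinement uses Mostow rigidity to control $\mathrm{Out}(\pi_1(M))$. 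It therefore suffices to reconstruct $(\mathcal O_\phi, \F^s_\phi, \F^u_\phi)$ together with the $\pi_1$-action from $\mathcal C$.

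The periodic points of the $\pi_1(M)$-action on $\mathcal O_\phi$ are exactly the fixed points of those nontrivial $g \in \pi_1(M)$ with $[g] \in \mathcal C \cup \mathcal C^{-1}$, and density of periodic orbits of $\phi$ forces these points to be dense in $\mathcal O_\phi$. Since the assignment $g \mapsto \mathrm{Fix}(g)$ is $\pi_1$-equivariant, the set $\mathcal C$ determines both the dense periodic set of the $\pi_1$-action on $\mathcal O_\phi$ and how $\pi_1(M)$ permutes it.

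The main obstacle is the recovery of the bifoliation from this discrete periodic data. The key point, which I expect to be the hard step, is to show that two closed orbits lift to the same stable leaf of $\mathcal O_\phi$ if and only if the corresponding $\pi_1$-elements can be connected by a chain of lozenges and perfect fits in the sense of Fenley, and that the existence of such a chain can be read off purely from commutation and axis-sharing relations among elements whose conjugacy classes lie in $\mathcal C$. Once this combinatorial-algebraic criterion for stable-leaf equivalence is in place, density of the periodic set together with the fact that $\mathcal O_\phi \cong \R^2$ allows one to extend the recovered partial bifoliation on the periodic set to all of $\mathcal O_\phi$ by taking closures of leaves, which recovers the full bifoliated plane with its $\pi_1$-action. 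In the toroidal case, ambiguity enters via shear-type self-orbit-equivalences supported inside the Seifert-fibered JSJ pieces through which the flow passes; each such piece contributes only finitely to the mapping class ambiguity, which yields the finite-ambiguity statement rather than uniqueness.
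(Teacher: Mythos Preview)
The paper does not prove this theorem at all: it is stated with citations to \cite{barthelme.fenley.ea.AnosovFlowsSame} and \cite{barthelme.frankel.ea.OrbitEquivalencesPseudoAnosov} and then used as a black box in the proof of Theorem~A. There is consequently nothing in the paper to compare your proposal against.

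That said, your outline does track the strategy of the cited references: one shows that the $\pi_1(M)$-action on the bifoliated orbit space is determined by the periodic data, and then invokes the principle that the bifoliated plane with its $\pi_1$-action determines the flow up to orbit equivalence isotopic to the identity. Where your sketch is too optimistic is in the step you flag as hard: the claim that ``two closed orbits lift to the same stable leaf if and only if the corresponding $\pi_1$-elements can be connected by a chain of lozenges and perfect fits'' is not quite the right formulation, and the actual reconstruction in the cited papers proceeds differently. In the hyperbolic case one uses the ideal boundary of $\widetilde M$ and Fenley's work relating endpoints of leaves to points on $\partial_\infty\widetilde M$; the free homotopy data determines which pairs of boundary points are linked by stable/unstable leaves of periodic orbits, and from this linking data one rebuilds the bifoliated plane. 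Your description of the toroidal ambiguity as coming from ``shear-type self-orbit-equivalences supported inside the Seifert-fibered JSJ pieces'' is also not accurate; the finite ambiguity in \cite{barthelme.frankel.ea.OrbitEquivalencesPseudoAnosov} arises from configurations of transverse tori (tree-of-scalloped-regions phenomena), not from Seifert pieces per se. If you intend to supply a proof rather than a citation, these are the places where genuine work remains.
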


In their theorem, it is really only necessary to know which primitive conjugacy classes are represented thanks to the following lemma:
\begin{lem}\label{lem:primitive}
    Let $g$ be any element of $\pi_1(M)$. Suppose $k\in \Z$. If $g^k$ is represented by a closed orbit of $\phi$, then so is either $g$ or $g^{-1}$. (Note that we are not ruling out the possibility that the orbit representing $g^k$ is a primitive orbit.)
\end{lem}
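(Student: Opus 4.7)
The plan is to pass to the action of $\pi_1(M)$ on the orbit space of the lifted flow and then invoke Brouwer's plane translation theorem. Let $\widetilde M$ be the universal cover and let $\mathcal{O} = \widetilde M / \widetilde \phi$ be the orbit space of the lifted pseudo-Anosov flow. For any transitive pseudo-Anosov flow on a closed oriented 3-manifold it is standard that $\mathcal{O}$ is homeomorphic to $\mathbb{R}^2$ and that $\pi_1(M)$ acts on $\mathcal{O}$ by orientation-preserving homeomorphisms (the orientation coming from that of $M$ together with the oriented flow direction). Closed orbits of $\phi$ correspond to fixed points of this action: an element $h \in \pi_1(M)$ fixes some $\pi\in\mathcal{O}$ if and only if the flow orbit in $M$ projecting from $\pi$ is a closed orbit and $h$ lies in the (cyclic) stabilizer $\langle h_0 \rangle$ generated by the primitive free homotopy class $h_0$ of that orbit.

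With this translation, the hypothesis that $g^k$ is represented by a closed orbit becomes the statement that $g^k$ has a fixed point $\pi\in\mathcal{O}$; equivalently, $\pi$ is a periodic point of $g$ of period dividing $k$. Brouwer's plane translation theorem asserts that any orientation-preserving self-homeomorphism of $\mathbb{R}^2$ possessing a periodic point must also possess a fixed point. Applied to $g$ acting on $\mathcal{O}$, it furnishes a fixed point $\pi'$ of $g$. The closed orbit in $M$ projecting from $\pi'$ then represents $g$ or $g^{-1}$, depending on whether $g$ translates along the corresponding lifted flow line in the positive or the negative flow direction.

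The only genuinely non-trivial input is Brouwer's classical theorem. The identification of $\mathcal{O}$ as an oriented topological plane carrying an orientation-preserving $\pi_1(M)$-action is standard in the foliation-theoretic treatment of pseudo-Anosov flows and can simply be cited, so I do not anticipate any serious obstacle. The ``$g$ or $g^{-1}$'' in the conclusion reflects exactly the choice of whether $g$ acts on its invariant flow line in the direction of, or opposite to, the flow.
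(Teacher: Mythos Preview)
Your proposal is correct and coincides with the primary argument given in the paper: pass to the orbit space of the lifted flow, which is a plane on which $\pi_1(M)$ acts by orientation-preserving homeomorphisms, and apply Brouwer's plane translation theorem to deduce that if $g^k$ has a fixed point then so does $g$. The paper also sketches two alternative proofs via the stable leaf space and via Fenley's lozenge machinery, but your Brouwer argument is exactly the one the paper presents first as the simplest route.
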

\begin{proof}
    There are several ways to prove this, with varying levels of difficulty depending on how much one knows about the structure of the orbit space of pseudo-Anosov flows. The simplest (communicated to us by Barthelm\'e and Mann) relies on Brouwer's translation theorem on the plane. The theorem states if $g$ acts on $\R^2$ without fixed points, then so does $g^k$ for any $k\neq 0$. Applying this theorem to the action of $g$ on the orbit space of $\phi$ gives the desired result.

    One can also prove the lemma by looking at the action of $g$ on other related spaces. Let $L^s$ be the leaf space of the stable foliation of the orbit space of $\phi$, and let $\overline {L^s}$ be its Hausdorffification. We consider all the leaves abutting a singular orbit to be the same leaf of $L^s$. The action of $g$ on $L^s$ has a fixed point if and only if either $g$ or $g^{-1}$ is represented by an orbit. Suppose that $g$ acts on $\overline{L^s}$ without fixed points. 
    The Hausdorffification $\overline{L^s}$ is a topological tree, meaning that it is simply connected and between every pair of points there is a unique minimal path. If $g$ acts without fixed points on such a space, then it acts as a translation along an axis. Thus $g^k$ would also act without fixed points on $\overline{L^s}$, which is a contradiction. Modulo the difference between $L^s$ and $\overline{L^s}$, this proves the desired claim.

    The proof suggested above can be simplified using the machinery of lozenges to produce an honest tree. This argument is explained in \cite{fenley.HomotopicIndivisibilityClosed}, and we briefly recall the strategy. Consider the set of all points in the orbit space fixed by $g^k$. Each point in this set corresponds with a closed orbit representing $g^{\pm k}$. This set is invariant under the action of $g$. Fenley proves that this set is naturally the vertex set of a (possibly infinite) planar tree $T$, whose edges correspond with so-called lozenges \cite[Theorem 3.5]{fenley.QuasigeodesicAnosovFlows}. Furthermore, $T$ comes with an embedding into $L^s$. We have three cases:
    \begin{enumerate}[label=Case \arabic*:]
        \item $g$ acts on $T$ as translation along an axis. This can't happen because $g^k$ would not fix a vertex.
        \item $g$ fixes a vertex of $T$. This vertex is an orbit representing $g$ or $g^{-1}$.
        \item $g$ acts on $T$ as a rotation around the midpoint of some edge. This edge may be interpreted as interval in $L^s$, so $g$ fixes a point in $L^s$ as desired.
    \end{enumerate}
\end{proof}

The next lemma is another consequence of the theory of lozenges, see \cite[Theorem 3.3]{fenley.QuasigeodesicAnosovFlows} for the original argument and \cite[Propostion 2.24]{barthelme.frankel.ea.OrbitEquivalencesPseudoAnosov} for the statement in the case of pseudo-Anosov flows.
\begin{lem}[Fenley]\label{lem:lozenge}
    Suppose $\gamma_1$ and $\gamma_2$ are distinct closed orbits of $\phi$ in the same free homotopy class. Then $\gamma_1$ and $\gamma_2$ are each either a (possibly non-primitive) positive hyperbolic orbit or a cover of a singular orbit. Moreover, if $\gamma_i$ is a cover of a singular orbit, then the monodromy along $\gamma_i$ does not rotate the stable-unstable quadrants.
\end{lem}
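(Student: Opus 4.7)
The plan is to work in the orbit space $\mathcal O$ of $\widetilde\phi$ (the lift of $\phi$ to the universal cover $\widetilde M$), using Fenley's theory of lozenges. If $\gamma_1$ and $\gamma_2$ are freely homotopic closed orbits of $\phi$, we can choose lifts $\widetilde\gamma_1,\widetilde\gamma_2\subset \widetilde M$ and a nontrivial element $g\in\pi_1(M)$ that preserves both; passing to $\mathcal O$ we obtain two distinct points $p_1,p_2$ fixed by the action of $g$. First I would recall that each $p_i$ lies at the intersection of a stable leaf and an unstable leaf in $\mathcal O$ (possibly singular), and that the local structure of $g$ at $p_i$ is determined by whether the underlying closed orbit is positive hyperbolic, negative hyperbolic, or singular, together with the monodromy permutation of the prongs.

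Next I would invoke Fenley's structure theorem: two distinct $g$-fixed points in $\mathcal O$ are connected by a finite chain of lozenges, where each lozenge has its two corners on a common $g$-orbit and is bounded by stable and unstable half-leaves emanating from those corners. This is the content of \cite[Theorem 3.5]{fenley.QuasigeodesicAnosovFlows}, and it is precisely the input needed. For our purposes, what matters is the local picture at each corner: a lozenge with a corner at $p_i$ occupies exactly one of the four (or $2k$, in the singular case) quadrants determined by the local stable and unstable leaves at $p_i$, and $g$ must preserve both half-leaves bounding that quadrant (not just swap them).

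From this local picture the conclusions are essentially immediate. If $\gamma_i$ is a nonsingular orbit, then $g$ fixes the two local stable half-leaves and the two local unstable half-leaves at $p_i$ separately; this is incompatible with negative hyperbolicity (which swaps the two stable half-leaves and swaps the two unstable half-leaves), so $\gamma_i$ must be positive hyperbolic, possibly non-primitive. If $\gamma_i$ is a singular orbit of valence $k$, the same argument shows that the monodromy permutation of the $2k$ stable-unstable quadrants must fix the quadrant occupied by the lozenge; in particular it fixes the corresponding stable and unstable prongs and therefore does not rotate the quadrants.

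The only subtlety I expect is handling the case where $p_1$ and $p_2$ are joined by a chain of more than one lozenge, so that the relevant local quadrant at $p_i$ is determined by the first (or last) lozenge in the chain rather than by a single lozenge between $p_1$ and $p_2$; but the local analysis at $p_i$ only uses that lozenge, so the argument goes through unchanged. Everything else reduces to quoting \cite[Theorem 3.5]{fenley.QuasigeodesicAnosovFlows} and reading off the corner data, as is done in \cite[Proposition 2.24]{barthelme.frankel.ea.OrbitEquivalencesPseudoAnosov}.
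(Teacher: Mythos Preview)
Your proposal is correct and matches the paper's approach: the paper does not give an independent proof of this lemma but simply cites Fenley's lozenge theory (Theorem~3.3 in \cite{fenley.QuasigeodesicAnosovFlows}, not~3.5 as you wrote) and \cite[Proposition~2.24]{barthelme.frankel.ea.OrbitEquivalencesPseudoAnosov}, and your sketch is exactly the argument one finds there---lift to the orbit space, connect the two $g$-fixed points by a $g$-invariant chain of lozenges, and read off from the corner quadrant that the local monodromy must fix a stable and an unstable half-leaf, ruling out negative hyperbolic orbits and rotating singular covers.
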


\begin{lem}\label{lem:samegrading}
    Let $g$ be a primitive element of $\pi_1(M)$. Then $\phi$ has a periodic orbit representing $g$ if and only if $(\omega_\phi, \lambda_\phi)$ has a Reeb orbit representing $g$. Moreover, after applying an exact stable homotopy if necessary, all Reeb orbits of $(\omega_\phi, \lambda_\phi)$ representing $g$ have the same Lefschetz index.
\end{lem}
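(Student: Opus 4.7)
The plan is to analyze the Reeb orbits of $(\omega_\phi, \lambda_\phi)$ directly from the construction in \cref{constr:main}. The Reeb orbits split into two families: those lying outside all the eyes and glued-in solid tori, and those lying inside. Orbits of the first family correspond bijectively to the $\phi$-orbits outside the blown-up set via the semiconjugacy between $\varphi_{blowup}$ and $\varphi$, preserving free homotopy class. An orbit of the second family sits inside a tubular solid-torus neighborhood of the corresponding blown-up $\phi$-orbit $\gamma$, whose fundamental group is $\Z\langle [\gamma]\rangle$, so it must represent a positive power $[\gamma]^m$ with $m \geq 1$. Given primitive $g$, the equation $g = [\gamma]^m$ forces $m = 1$, so that $\gamma$ itself is a $\phi$-orbit in class $g$; this yields the backward direction of the equivalence. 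The forward direction is immediate, since a blown-up $\phi$-orbit $\gamma$ is represented in its eye by the central elliptic or core Reeb orbit.

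For the Lefschetz-index claim, I first use \cref{prop:avoid} to choose a Birkhoff section $\mathcal B'$ of $\phi$ whose boundary orbits are disjoint from the finite set of $\phi$-orbits representing $g$ (finiteness follows from \cref{lem:lozenge}). The uniqueness lemma at the end of Section 3 then yields an exact stable homotopy from $(\omega_\phi, \lambda_\phi)$ to $(\omega_{\phi, \mathcal B'}, \lambda_{\phi, \mathcal B'})$, after I adjust $[\omega]$ if necessary using the flexibility of \cref{constr:main2}. In this new SHS no core orbit of a glued-in solid torus can represent $g$, so every Reeb orbit in class $g$ comes either from a non-blown-up $\phi$-orbit or from the eye of a singular $\phi$-orbit in class $g$. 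I then invoke \cref{lem:lozenge}: if multiple $\phi$-orbits represent $g$, they must all be positive hyperbolic or non-rotating singular, giving either positive hyperbolic Reeb orbits (Lefschetz index $-1$) directly, or $k-2$ Morse saddles per nonrotating blowup (again Lefschetz index $-1$); if a unique $\phi$-orbit represents $g$, there is either a single Reeb orbit in class $g$ (same-index is vacuous) or we are in the rotating singular case.

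The hard part will be the last case, where $g$ is represented by a unique rotating singular orbit $\sigma$. Here I must show that only the central elliptic Reeb orbit in the eye of $\sigma$ represents $g$. The key computation takes place on the branched cover used in \cref{subsec:blowup}: the lifts of the off-center hyperbolic fixed point of the $\R^2$-model are permuted cyclically by the post-composed rotation matching the prong-rotation monodromy $2\pi m/k$ of $\sigma$. Since $\gcd(m,k)=1$ for a pseudo-Anosov singularity, the action is a single $k$-cycle, so these lifts assemble into one closed Reeb orbit of period $k$ in the mapping torus, which winds $k$ times around $\sigma$ and represents the non-primitive class $[\sigma]^k$. The only remaining Reeb orbit in the eye representing $g$ is then the central elliptic, so the same-index conclusion holds trivially.
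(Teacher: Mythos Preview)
Your overall strategy matches the paper's: relate Reeb orbits to $\phi$-orbits via the blowup semiconjugacy, choose the Birkhoff section so that no binding orbit lies in class $g$, and then invoke \cref{lem:lozenge} to reduce to the rotating-singular case. The main gap is in that last case. You verify that the positive hyperbolic orbits on the boundary of the eye wind more than once around $\sigma$ (your claim that $\gcd(m,k)=1$ for a pseudo-Anosov singularity is not actually true---a $4$-prong can rotate by $\pi$, for instance---but the conclusion survives since $k/\gcd(m,k)\ge 2$ whenever the monodromy is rotating). What you omit entirely are the Morse--Bott invariant tori filling the eye between the elliptic centre and the separatrix. On each such torus the Reeb flow is linear with a slope that varies continuously across the eye, and whenever the slope is rational one gets an $S^1$-family of closed Reeb orbits; for SFT purposes each family contributes one elliptic and one positive hyperbolic orbit. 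Nothing in the bare construction prevents one of these families from winding exactly once around $\sigma$, which would produce Reeb orbits in class $g$ of \emph{both} Lefschetz signs. The paper closes this gap with an extra \ref{item:type4} exact stable homotopy inside the eye, adjusting the flow so that the slope is everywhere close to that of the boundary hyperbolic orbits; since those wind $\ge 2$ times, no Morse--Bott family can then represent the primitive class $g$, leaving only the central elliptic.

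A smaller issue: your parenthetical ``finiteness follows from \cref{lem:lozenge}'' is not correct---chains of lozenges can be infinite, and pseudo-Anosov flows may have infinitely many closed orbits in a single free homotopy class. What you actually need (and what the paper asserts) is only that the binding can be chosen outside the class of $g$; this comes from the flexibility in the Fried--Brunella construction of Birkhoff sections, not from any finiteness statement.
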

\begin{proof}
    Recall that the Lefschetz index is +1 when the linearized return map along the orbit is elliptic or negative hyperbolic, and -1 when it is positive hyperbolic. For convenience, construct $(\omega_\phi,\lambda_\phi)$ using a Birkhoff section whose boundary orbits are not freely homotopic to any multiple of $[\gamma]$. Let $R$ be the Reeb flow of $(\omega_\phi, \lambda_\phi)$. The closed orbits of $\phi$ and $R$ are in 1-1 correspondence except at blownup orbits. When an orbit undergoes a clockwise or counterclockwise blowup, several positive hyperbolic orbits appear at the boundary of the eye, a single elliptic orbit appears at the centre, and many invariant Morse--Bott tori appear. When an orbit undergoes a nonrotating blowup, the single singular orbit splits into several positive hyperbolic orbits in the same free homotopy class and no other orbits appear.
\begin{enumerate}[label=Case \arabic*:]
    \item $\phi$ has no orbit representing $g$. Then neither does the Reeb flow of $(\omega_\phi, \lambda_\phi)$.
    \item $\phi$ has a negative hyperbolic or rotating singular orbit $\gamma$ representing $g$. By \cref{lem:lozenge}, $\gamma$ is the unique closed orbit of $\phi$ in its free homotopy class. To treat the two cases uniformly, just blow up the negative hyperbolic orbit. Since the monodromy along $\gamma$ is rotating, the positive hyperbolic closed orbits on the boundary of the resulting eye are not primitive. By a \ref{item:type4} exact stable homotopy, we can arrange that the slope of the flow inside the eye is very close to the slope of the hyperbolic orbits on the boundary of the eye. With this choice, none of these Morse--Bott closed orbits are primitive either. Therefore, the only closed Reeb orbit in $[\gamma]$ is the elliptic closed orbit at the centre of the eye.
    \item $\phi$ has a positive hyperbolic or nonrotating singular orbit $\gamma$ representing $g$. In this case, by \cref{lem:lozenge}, all the orbits of $\phi$ in the same free homotopy class are also positive hyperbolic or nonrotating singular. Therefore, all the Reeb orbits in this free homotopy class are positive hyperbolic.
\end{enumerate}

\end{proof}
\end{subsection}
\begin{subsection}{Rational symplectic field theory and free homotopy classes}
Rational symplectic field theory is the homology of a chain complex associated with a contact form. It is so named because it is a simplified version of symplectic field theory which counts only genus 0 holomorphic curves. The compactness results required for the construction of SFT continue to hold in the stable Hamiltonian setting; indeed, this was one of the original motivations for studying stable Hamiltonian structures \cite{bourgeois.eliashberg.ea.CompactnessResultsSymplectic, cieliebak.mohnke.CompactnessPuncturedHolomorphic}. In this paper, we do not address any of the foundational transversality issues in SFT, which are expected to have resolutions in any one of the virtual perturbation frameworks now available \cite{hofer.wysocki.ea.PolyfoldFredholmTheory, fish.hofer.LecturesPolyfoldsSymplectic}, \cite{pardon.AlgebraicApproachVirtual,pardon.ContactHomologyVirtual}, \cite{ishikawa.ConstructionGeneralSymplectic}. Instead, we focus on the adaptations necessary to formulate the theory for stable Hamiltonian structures. Compared to the contact case, two novelties arise:
\begin{fact}\label{fact:shorthomotopies}
    An exact stable homotopy between two SHS does not always give rise to a symplectic cobordism. A way around this is presented in \cite[Corollary 7.27]{cieliebak.volkov.FirstStepsStable}: an exact stable homotopy may be cut into short homotopies, each of which gives rise to a symplectic cobordism. In the stable Hamiltonian setting, one can compose two symplectic cobordisms provided that they are sufficiently ``short''. See \cite[Section 7]{cieliebak.volkov.FirstStepsStable} for more details, especially the definition of symplectic cobordism in Section 7.1 and the discussion leading up to Theorem 7.28.
\end{fact}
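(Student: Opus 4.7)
The statement packages three sub-claims drawn from \cite[Section 7]{cieliebak.volkov.FirstStepsStable}, and the plan is to outline how one would verify each. For the first---that an exact stable homotopy need not yield a symplectic cobordism---I would exhibit an exact homotopy $(\omega_t,\lambda_t)_{t\in[0,1]}$ whose admissible intervals shift along the path so that no single constant $c\in\R$ lies in every admissible interval. Since a strong symplectic cobordism on $[0,1]\times M$ restricts on each slice to a closed 2-form of the form $\omega_t + c(t)\,d\lambda_t$ with $c(t)$ admissible at each $t$, the nonexistence of a continuous admissible interpolation $c:[0,1]\to\R$ joining the prescribed endpoint values $\varepsilon_0$ and $\varepsilon_1$ is the obstruction.

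For the decomposition into short homotopies, I would partition $[0,1]$ into subintervals $[t_i,t_{i+1}]$ small enough that the admissible intervals of $(\omega_t,\lambda_t)$ for $t\in[t_i,t_{i+1}]$ share a common constant $c_i$ in their intersection. On each piece, exactness supplies a family of primitives $\omega_t = \omega_{t_i} + d\eta_t$, and one builds an explicit 2-form
\[
    \Omega_i \;=\; \omega_t + c_i\,d\lambda_t + dt\wedge \alpha_t
\]
on $[t_i,t_{i+1}]\times M$, where $\alpha_t$ is chosen (roughly $\alpha_t = \dot\eta_t + c_i\dot\lambda_t$) so that $\Omega_i$ is closed. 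A continuity argument then confirms $\Omega_i\wedge\Omega_i > 0$: the positivity reduces at $t=t_i$ to the SHS condition $\omega_{t_i}\wedge\lambda_{t_i}>0$, and persists by compactness for $t$ close to $t_i$ provided the subinterval is taken short enough.

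For composability, two short cobordisms meeting at $(\omega_1,\lambda_1)$ with incoming value $\varepsilon_-$ and outgoing value $\varepsilon_+$ can be glued by inserting a trivial cobordism of the form $\omega_1 + s(t)\,d\lambda_1$ that interpolates linearly between them. This insertion succeeds exactly when the segment between $\varepsilon_-$ and $\varepsilon_+$ lies inside the admissible interval of $(\omega_1,\lambda_1)$, a condition guaranteed by taking the short cobordisms to be ``short'' in the sense that $\varepsilon_\pm$ lie near each other relative to the admissible interval.

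The main obstacle is the bookkeeping at the composition step: the $\varepsilon$-values produced at the boundary of each short piece must be chosen in a globally consistent way so that every consecutive pair along the chain is composable. This amounts to organizing the partition of $[0,1]$, the locally constant choices $c_i$, and the boundary $\varepsilon$-values into one coherent refinement in which all composition constraints are simultaneously satisfiable. Once this refinement is arranged, the resulting broken chain of short symplectic cobordisms is what allows the SFT-invariance machinery to be ported from the contact setting to the stable Hamiltonian setting.
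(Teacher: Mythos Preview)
The paper does not supply a proof of this statement: it is recorded as a \emph{Fact} and the reader is referred to \cite[Corollary 7.27 and Section 7]{cieliebak.volkov.FirstStepsStable} for the argument. So there is nothing in the paper to compare your proposal against; you are sketching the content of the cited reference rather than reproducing anything the paper does.

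That said, your outline tracks the Cieliebak--Volkov construction reasonably well, with one imprecision worth flagging. In your symplecticity check you write that $\Omega_i\wedge\Omega_i>0$ ``reduces at $t=t_i$ to the SHS condition $\omega_{t_i}\wedge\lambda_{t_i}>0$''. But with your formula $\Omega_i=\omega_t+c_i\,d\lambda_t+dt\wedge\alpha_t$ one computes $\Omega_i\wedge\Omega_i = 2\,dt\wedge\alpha_t\wedge(\omega_t+c_i\,d\lambda_t)$, and at $t=t_i$ the $1$-form $\alpha_{t_i}=\dot\eta_{t_i}+c_i\dot\lambda_{t_i}$ is not $\lambda_{t_i}$ in general, so positivity does not reduce to the SHS axiom as stated. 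The actual argument in \cite{cieliebak.volkov.FirstStepsStable} arranges the $dt$-component differently (or reparametrises so that the slice form is $\omega_t + f(t)\,d\lambda_t$ with $f$ monotone and the stabilising $1$-form itself enters the $dt$-part), and then positivity follows from $\lambda_t\wedge(\omega_t+c\,d\lambda_t)>0$ for $c$ in the admissible interval. Your identification of the obstruction (drift of admissible intervals), the partition-and-glue strategy, and the composability caveat are all correct in spirit; only the explicit formula for $\Omega_i$ and the ensuing positivity check need adjusting.
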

\begin{fact}\label{fact:positiveends}
    Holomorphic curves in the symplectization of an SHS may have no positive ends. This permits some new kinds of breaking of holomorphic curves. 
\end{fact}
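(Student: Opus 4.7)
The plan is to establish the fact by contrasting the SHS symplectization with the contact case and exhibiting a concrete example. First I would recall the standard Stokes argument in the contact case: for the symplectization $(\R\times M, d(e^s\alpha))$, any finite-energy punctured holomorphic curve $C$ with positive asymptotics $\gamma_i^+$ and negative asymptotics $\gamma_j^-$ satisfies
\[
0 \;\leq\; \int_C d\alpha \;=\; \sum_i \int_{\gamma_i^+}\alpha \;-\; \sum_j \int_{\gamma_j^-}\alpha,
\]
and since $\int_\gamma \alpha > 0$ on every closed Reeb orbit, $C$ must have at least one positive end.

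Next I would explain why this argument breaks down in the SHS symplectization $(\R\times M, \Omega)$ with $\Omega = \omega + d(h(s)\lambda)$ for a suitable increasing function $h$. The energy of $C$ decomposes into an $\omega$-piece and a $d\lambda$-piece. The $d\lambda$-piece still collapses to a sum of period integrals at the ends via Stokes, but the $\omega$-piece equals the topological pairing $\langle[\omega],[C]\rangle$, because $\omega$ is only closed, not exact in general. Consequently $C$ can carry strictly positive $\Omega$-energy even in the absence of positive ends, since the cohomology class $[\omega]$ supplies a source of energy not visible at the boundary.

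To produce an explicit example I would take $M$ to be the mapping torus of an area-preserving pseudo-Anosov $\varphi\colon\Sigma\to\Sigma$, with $\lambda$ pulled back from $d\theta$ on $S^1$ and $\omega$ the closed 2-form restricting to the $\varphi$-invariant area form on each fiber. Here $d\lambda = 0$, so the entire manifold lies in the integrable region and the Reeb field is $\partial_\theta$. For any $\Omega$-compatible almost complex structure $J$ on $\R\times M$ that sends $\partial_s$ to $\partial_\theta$ and preserves the fiber distribution $\ker\lambda$ (such $J$ exist because the fibers are $\omega$-symplectic), each fiber slice $\{s_0\}\times\Sigma_{\theta_0}$ is a closed $J$-holomorphic surface with no punctures at all, and $\langle[\omega],[\Sigma_{\theta_0}]\rangle > 0$. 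Variants with only negative ends (but no positive ends) can be produced by capping such a fiber off at one end.

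The principal obstacle is not really in producing such curves, but in extracting from SFT a theory robust enough to accommodate them. This is the content of the SHS compactness theorems of \cite{bourgeois.eliashberg.ea.CompactnessResultsSymplectic, cieliebak.mohnke.CompactnessPuncturedHolomorphic}, which introduce new types of breaking---such as sphere bubbles absorbing bounded $\omega$-energy and limit curves with only negative ends---precisely to compensate for the failure of the Stokes bound that is automatic in the contact case.
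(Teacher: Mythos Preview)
The paper does not prove this statement: it is listed as a \texttt{fact} environment and is presented as background, with no argument given. Its role is simply to flag one of the two novelties of SFT in the stable Hamiltonian setting before moving on to the Hutchings $q$-variables formulation. So there is nothing in the paper to compare your argument against.

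That said, your explanation is essentially correct and would make a perfectly good justification if one were desired. The Stokes computation in the contact case and its failure when $\omega$ is merely closed are exactly the point, and your mapping-torus example is the standard illustration: with $d\lambda=0$ and $J$ preserving the fiber distribution, each fiber $\{s_0\}\times\Sigma_{\theta_0}$ is a closed $J$-holomorphic curve of positive $\omega$-area and no punctures whatsoever. One small comment: the sentence about ``capping such a fiber off at one end'' to produce curves with only negative ends is vague as written---the fiber is already closed, so there is nothing to cap. If you want an example with only negative ends, it is cleaner to work in a neighbourhood of a Morse--Bott torus in the integrable region, or simply to note that once closed curves exist, SFT-type breakings in a cobordism can produce configurations where a level has only negative asymptotics (this is the phenomenon drawn in the paper's \cref{fig:breaking}). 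But for the purposes of the fact as stated, the closed-fiber example already suffices.
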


\begin{rmk}
    
    Cylindrical contact homology is well defined for hypertight contact structures. An attempt to generalize the theory to stable Hamiltonian structures runs up against both \cref{fact:shorthomotopies} and \cref{fact:positiveends}. Because of \cref{fact:positiveends}, 1-parameter families of holomorphic cylinders can break as on the left side of \cref{fig:breaking}. Because of \cref{fact:shorthomotopies}, a homotopy between two hypertight SHS may not give rise to a symplectic cobordism. Instead, the homotopy must be cut into short homotopies which each give rise to a symplectic cobordism. However, we cannot guarantee that the stable Hamiltonian structures at which we cut are hypertight and we will need to consider breaking of holomorphic curves as on the right side of \cref{fig:breaking}.
\end{rmk}

\begin{figure}
    \centering
    \def\svgwidth{4in}
    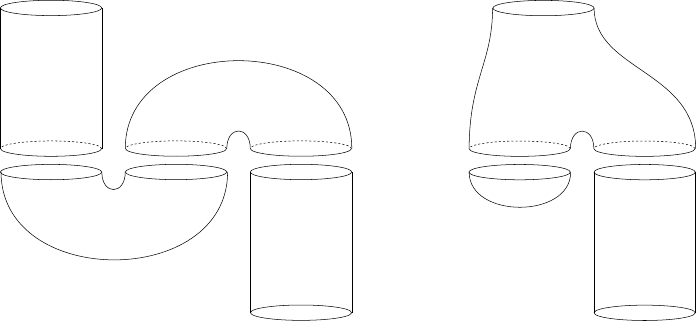
    \caption{Two ways a cylinder can break in a symplectization. In all our pictures, the symplectic cobordism goes from the top of the picture to the bottom of the picture.}\label{fig:breaking}
\end{figure}

Due to \cref{fact:positiveends}, the SFT generating function may now contain terms with negative powers of $\hbar$. Therefore, the original formulation of rational SFT as a limit $\hbar \to 0$ does not literally make sense. Instead, we use the ``$q$-variables only'' formulation proposed by Hutchings \cite{hutchings.RationalSFTUsing}. Here is a summary of the approach.

Let $(\omega,\lambda)$ be an SHS on $M$ with Morse--Bott Reeb flow. A closed, possibly non-primitive orbit $\gamma$ of $R$ is either positive hyperbolic, negative hyperbolic, elliptic, or part of an $S^1$ Morse--Bott family. For the purposes of SFT, each $S^1$ Morse--Bott family counts as two orbits, one positive hyperbolic and one elliptic. 2-dimensional Morse-Bott families can be eliminated by a small exact stable homotopy, so we may ignore them. Each orbit has a $\Z/2$ grading $gr$ depending on its Lefschetz sign: positive hyperbolic orbits have odd grading, while elliptic and negative hyperbolic orbits have even grading. An orbit is \emph{bad} if it is an even cover of a negative hyperbolic orbit; otherwise, it is \emph{good}.

Introduce variables $p_\gamma$, $q_\gamma$ for each good orbit of $R$, and define $gr(p_\gamma) = gr(q_\gamma)=gr(\gamma)$. Let $\mathcal{A}(\omega,\lambda)$ be the unital algebra over (the Novikov completion of) $\Q[H_2(M)]$ generated by all the variables $p_\gamma$, $q_\gamma$ modulo the graded commutativity relation $$xy = (-1)^{gr(x)  gr(y)} yx.$$ Choose a projection $A$ from 2-chains in $M$ to $H_2(M,\R)$. A holomorphic curve with positive ends at ${\gamma_1^+},\dots,{\gamma_j^+}$ and negative ends at $\gamma_1^-,\dots,\gamma_k^-$ is encoded by a monomial of the form $Ce^{A(u)}p_{\gamma_1^+}\dots p_{\gamma_k^+}q_{\gamma_1^-},\dots,q_{\gamma_k^-}$. The term $e^{A(u)}\in\Q[H_2(M)]$ keeps track of the relative homology class of $u$. We define the SFT generating function as $${\mathbf h}=\sum_{\gamma_i^+, \gamma_i^-}\sum_u C(u)p_{\gamma_1^+}\dots p_{\gamma_k^+}q_{\gamma_1^-},\dots,q_{\gamma_k^-}$$ where $C(u)\in \Q$ is a rational weight and the sum runs over all connected index 1, genus 0 holomorphic curves in a trivial cobordism $\R\times M$ asymptotic to $\gamma_1^+,\dots,\gamma_k^+$ on the positive end and $\gamma_1^-,\dots,\gamma_k^-$ on the negative end. We defer discussion of the rational weights to \cite{eliashberg.glvental.ea.IntroductionSymplecticField} or \cite[Lecture 12]{wendl.LecturesSymplecticField}.

Now we define a new algebra $\mathcal{A}'(\omega, \lambda)$ which will serve as the chain group for rational SFT. The generators of $\mathcal{A}'(\omega,\lambda)$ are monomials in the $q$-variables, equipped with a partition of the factors into $m\geq 0$ equivalence classes. Hutchings asks us to think of such a monomial as a collection of Reeb orbits, where Reeb orbits in the same equivalence class are boundary components of the same connected genus zero curve above the Reeb orbits, and so are not allowed to be glued to the same component of a holomorphic curve below the Reeb orbits.

Now we define a product $\mathcal A' \times \mathcal A \to \mathcal A'$. For monomials $x\in \mathcal A'$ and $y\in \mathcal A$, we are to interpret $x\cdot y$ as the sum of all the ways to glue the positive ends of a holomorphic curve $y$ to the collection of Reeb orbits $x$. Here is how we keep track of the relevant signs. Say that a $p$-variable matches a $q$-variable if they correspond to the same Reeb orbit. Form the product monomial $xy$, and choose for each $p$-variable in $y$ a matching $q$-variable in $x$, subject to the condition that all the matching $q$-variables come from different equivalence classes. This reflects the constraint that we must not connect two ends in the same equivalence class lest we create a curve of genus $>0$. Commute each $p$ variable to sit exactly to the right of its matching $q$ variable, and then annihilate the pair $q_\lambda p_\lambda$ and multiply by a rational weight $\kappa_\gamma$. Again, we defer discussion of the rational weights to \cite{eliashberg.glvental.ea.IntroductionSymplecticField} or \cite[Lecture 12]{wendl.LecturesSymplecticField} The result is a new monomial in $q$. The new $q$-variables from $y$ as well as all the equivalence classes of matched $q$ variables in $x$ are merged into a single equivalence class. Finally, $x\cdot y$ is the sum over all choices in this construction.

We define the differential $\partial$ on $\mathcal A'(\omega, \lambda)$ by $\partial x = x \cdot {\mathbf h}$. The fact that $\partial^2=0$ follows from looking at the ends of the moduli space of genus 0, index 2 holomorphic curves. Given a strong symplectic cobordism from  $(\omega_1,\lambda_1)$ to $(\omega_2, \lambda_2)$, one may define in a similar manner a cobordism map $$F:\mathcal A'(\omega_1,\lambda_1)\to \mathcal A'(\omega_2, \lambda_2)$$ which counts possibly disconnected index 0, genus 0 holomorphic curves in the completed cobordism. Finally, there are chain homotopies $$K:\mathcal A'_*(\omega_1,\lambda_1)\to \mathcal A'_{*+1}(\omega_2, \lambda_2)$$ associated to homotopies of symplectic cobordisms, defined by counting possibly-disconnected index -1, genus 0 curves. As a consequence of the algebraic setup, the composition of any combination of these maps counts only genus 0 curves.

\begin{lem}\label{lem:sameorbits}
    Let $g$ be a primitive element of $\pi_1(M)$. Suppose there is a Reeb orbit $\gamma$ of $(\omega,\lambda)$ representing $g$, and all other such Reeb orbits have the same Lefschetz index. Suppose $(\omega, \lambda)$ is exact stable homotopic to $(\omega',\lambda')$.  Then there is at least one orbit of $(\omega', \lambda')$ representing $g$.
\end{lem}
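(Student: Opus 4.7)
The plan is an invariance argument in rational SFT, adapting the cylindrical-contact-homology strategy of Barthelm\'e--Mann--Bowden to the stable Hamiltonian setting. The goal is to interpret the signed count $N(g) := \sum_{[\gamma] = g} \epsilon(\gamma)$ of Reeb orbits representing $g$ (weighted by Lefschetz sign $\epsilon(\gamma)$) as an invariant of exact stable homotopy. Under the same-Lefschetz-index hypothesis all summands share a sign, so $|N(g)|$ equals the number of such orbits and is positive on the $(\omega, \lambda)$ side, which will force an orbit representing $g$ on the $(\omega', \lambda')$ side.

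First, use \cref{fact:shorthomotopies} to cut the exact stable homotopy into short pieces, each giving an honest symplectic cobordism, and compose the resulting cobordism maps to obtain a chain map $F: \mathcal{A}'(\omega, \lambda) \to \mathcal{A}'(\omega', \lambda')$, together with a chain-homotopy inverse coming from the reversed sequence. Since $g$ is primitive, every Reeb orbit in class $g$ is simple and contributes a legitimate generator $q_\gamma \in \mathcal{A}'$; by hypothesis all such $q_\gamma$ sit in a single $\Z/2$-grading.

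Next, isolate the ``cylindrical-in-class-$g$'' piece of $\partial$ and of $F$. Free homotopy classes are preserved along holomorphic cylinders, so the linear-in-$q$ part of $\partial q_\gamma$ (for $[\gamma]=g$) involves only $q$-variables in class $g$; since $\partial$ flips $\Z/2$-grading, this linear part vanishes identically. Similarly, the linear-in-$q$ part of $F(q_\gamma)$ lands in the span of $q_{\gamma'} \in \mathcal{A}'(\omega', \lambda')$ with $[\gamma'] = g$. Assume for contradiction that no orbit of $(\omega',\lambda')$ represents $g$; then this target span is zero, so the linear-in-$q$ part of $F(q_\gamma)$ vanishes identically. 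The plan is to extract a contradiction by pairing $F(q_\gamma)$ against a dual cocycle that recovers the signed count $N(g)$, which is nonzero on the source side.

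The main obstacle, and where the argument goes beyond a direct transcription of cylindrical contact homology, is that rational SFT does not split cleanly by free homotopy class: pair-of-pants and higher-punctured-sphere contributions can mix unrelated classes, and the bare cylindrical count is not automatically invariant. The remedy is to install a filtration on $\mathcal{A}'$ by word length in the $q$-variables (or equivalently an action filtration), and to check that on the associated graded the cylindrical-in-class-$g$ information survives. Here the primitivity of $g$ is used to control the pair-of-pants contributions whose ends would otherwise conjugate-multiply into class $g$, and the same-parity hypothesis prevents the cancellation of orbits in pairs that would otherwise destroy $N(g)$. Once $N(g)$ is established as an invariant of $F$, its non-vanishing on the source side contradicts the supposition that no orbit of $(\omega',\lambda')$ represents $g$.
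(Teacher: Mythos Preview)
Your overall strategy---show that a signed count $N(g)$ is an invariant of exact stable homotopy and deduce nonemptiness on the target side---is natural, but as written it has a real gap and also diverges from the paper's route.

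The gap is in the filtration step. In the stable Hamiltonian setting, holomorphic curves in a symplectization can have \emph{no} positive ends (this is exactly \cref{fact:positiveends}), so neither $\partial$ nor the cobordism maps respect a word-length or action filtration in the direction you need: a ``cap'' (all negative ends) contributes a pure $q$-monomial to $\mathbf h$, and such terms mix freely with the cylindrical piece. Your sentence ``primitivity of $g$ is used to control the pair-of-pants contributions'' does not explain a mechanism; a pair of pants with positive end in class $g$ and negative ends in classes $a,b$ with $ab$ conjugate to $g$ is not excluded by primitivity alone. Moreover you never invoke hypertightness of $(\omega,\lambda)$, which in this paper is the key structural input; without it there is no reason the ``extra'' ends of a multi-punctured sphere are contractible, and the cylindrical sector cannot be isolated.

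The paper's argument avoids trying to make $N(g)$ an honest invariant. Instead it applies the round-trip composition $G_1\circ\cdots\circ G_n\circ F_n\circ\cdots\circ F_1$ to the single generator $q_\gamma$ and uses the chain-homotopy identity $\text{(round-trip)}-\mathrm{id}=K\partial+\partial K$. One then argues \emph{geometrically}: any broken genus~$0$ curve from $\gamma$ to $\gamma$ contributing to $K\partial$ or $\partial K$ would force an index~$1$ cylinder at the $(\omega,\lambda)$ level between two orbits in class $g$---here hypertightness kills the other positive ends, and the same-Lefschetz-index hypothesis kills the cylinder on grading grounds. Hence the identity term survives, giving a broken genus~$0$ curve through the stack of cobordisms from $\gamma$ to $\gamma$; at the $(\omega',\lambda')$ level, genus~$0$ topology forces some Reeb orbit $\mu$ to separate the two copies of $\gamma$, and hypertightness again makes $\mu$ the only end at that level, so $[\mu]=g$. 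The point is that the paper never isolates a cylindrical invariant algebraically; it extracts a single orbit $\mu$ from the topology of one genus~$0$ broken curve. If you want to repair your approach, you would need at minimum to bring hypertightness in explicitly and replace the filtration sketch with an argument of this geometric flavour.
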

\begin{proof}
    Since $g$ is primitive, $\gamma$ is good. By \cite[Corollary 7.27]{cieliebak.volkov.FirstStepsStable}, there is a sequence of symplectic cobordisms $X_i$ and $Y_i$ from $(\omega,\lambda)$ to $(\omega',\lambda')$
    $$(\omega, \lambda)=(\omega_0,\lambda_0)
    \xrightleftharpoons[Y_1]{X_1}
    (\omega_1,\lambda_1)
    \xrightleftharpoons[Y_2]{X_2}\dots\xrightleftharpoons[Y_n]{X_n}
    (\omega_n,\lambda_n)=(\omega',\lambda')$$ such that $X_i \circ Y_i$ and $Y_i \circ X_i$ are both homotopic to trivial symplectic cobordisms. Let $F_i$ be the cobordism map associated to $X_i$, let $G_i$ be the cobordism map associated to $Y_i$, and let $K_i$ be the chain homotopy between $G_i \circ F_i$ and the identity. Then $$G_1 \circ \dots \circ G_n \circ F_n \circ \dots \circ F_1:\mathcal A'(\omega, \lambda) \to \mathcal A'(\omega, \lambda)$$ is chain homotopic to the identity via the chain homotopy
    $$K:=\sum_{i=1}^n G_1 \circ \dots G_{i-1} \circ K_i \circ F_{i-1} \circ \dots \circ F_1$$

    The specific form of $K$ is unimportant. We only need to know that it counts broken genus 0 curves in a sequence of topologically trivial symplectic cobordisms, and that \begin{equation}\label{eqn:homotopy}G_1\circ\dots\circ G_n \circ F_n \circ \dots \circ F_1-id=K\partial + \partial K.\end{equation} Let us focus on broken holomorphic curves from $\gamma$ to $\gamma$ contributing to any of the terms in \cref{eqn:homotopy}. We claim that there are no such curves counted by $K\partial + \partial K$. To distinguish the two copies of $\gamma$, call the one at the positive end $\gamma^+$ and the other $\gamma^-$.  Suppose for example that there is a broken curve $(u_1, u_2,\dots,u_k)$ from $\gamma^+$ to $\gamma^-$ contributing to $\partial K$, where $(u_1,\dots,u_{k-1})$ contributes to $K$ and $u_k$ contributes to $\partial$. The positive end of $u_{k}$ may have several components, but since $u_1 \cup u_2\cup \dots \cup u_k$ has genus 0, one of those components separates $\gamma^+$ from $\gamma^-$. Call that orbit $\mu$. In fact, there cannot be any other positive ends of $u_{k}$, because those ends would have to bound disks but $(\omega, \lambda)$ has no contractible orbits. This rules out configurations like the one shown in \cref{fig:breaking2}. Therefore, $u_k$ is an index 1 cylinder from $\mu$ to $\gamma^-$ counted by $\partial$. But this contradicts \cref{lem:samegrading} which says that all the representatives of $[\gamma]$ have the same grading. In the same way, one can rule out curves contributing to $K\partial$.
    
    \begin{figure}
        \centering
        \def\svgwidth{3.5in}
        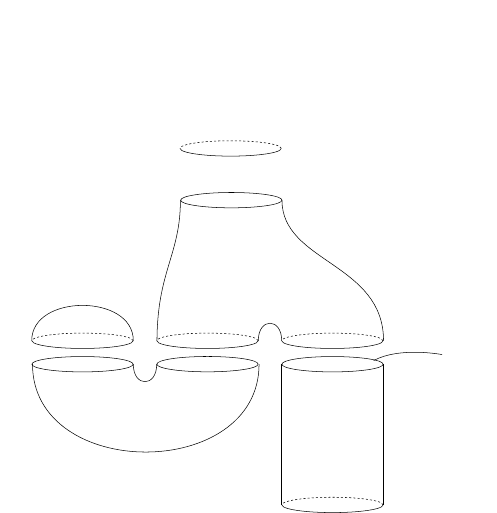
        \caption{}\label{fig:breaking2}
    \end{figure}

    It follows that there is a (possibly disconnected) genus 0 broken curve $u$ counted by $G_1 \circ \dots \circ G_n \circ F_n \circ \dots \circ F_1$ from $\gamma^+$ to $\gamma^-$. Since $\gamma$ is not contractible, some component of $u$ must connect $\gamma^+$ to $\gamma^-$. As before, some Reeb orbit $\mu$ of $(\omega',\lambda')$ must separate $\gamma^+$ from $\gamma^-$. All the remaining Reeb orbits along which $u$ breaks at that level bound disks in $u_1\cup u_2 \cup \dots u_k$. Therefore, $\mu$ is a Reeb orbit of $(\omega', \lambda')$ freely homotopic to $\gamma^+$.
\end{proof}

\begin{proof}[Proof of \cref{thm:main}] Suppose $(\omega_{\phi_1},\lambda_{\phi_1})$ is exact stable homotopic to $(\omega_{\phi_2},\lambda_{\phi_2})$. Call their Reeb flows $R_1$ and $R_2$. For any primitive element $g$ of $\pi_1(M)$, use \cref{lem:samegrading} to arrange that $\phi_i$ has an orbit representing $g$ if and only if $R_i$ has an orbit representing $g$, and moreover that all such orbits have the same Lefschetz sign. By \cref{lem:sameorbits}, $R_1$ has a Reeb orbit representing $g$ and only if $R_2$ does. Therefore, the primitive elements of $\pi_1(M)$ represented by $\phi_1$ are the same as those represented by $\phi_2$. The theorem now follows from \cref{thm:reconstruction} and \cref{lem:primitive}.
\end{proof}

\begin{rmk}
    It is sometimes possible to rule out genus 0 holomorphic curves for topological reasons. Given an $\R$-covered foliation $\mathcal F$ on a hyperbolic 3-manifold, Fenley and Calegari constructed a pseudo-Anosov flow transverse to $\mathcal F$ which \emph{regulates} $\mathcal F$. The regulating condition implies that each element of $\pi_1(M)$ represented by a closed orbit acts as an increasing homeomorphism on the leaf space of $\mathcal F$. It follows that no positive product of such elements is the identity. Therefore, there are no genus 0 holomorphic curves with only positive or only negative ends. This means that the unit in $\mathcal A'$, which corresponds with the empty set of Reeb orbits, is a nontrivial element of the homology of rational SFT. These stable Hamiltonian structures ought to be called tighter than hypertight, but we are quickly running out of superlative prefixes.
\end{rmk}

\begin{section}{Transverse contact structures}
    In preparation for proving \Cref{prop:tight}, we observe a slightly more general statement which applies to any hypertight stable Hamiltonian structure.
\begin{prop}\label{prop:transversecontact}
    Suppose $(\omega, \lambda)$ is a hypertight stable Hamiltonian structure on an irreducible rational homology 3-sphere $M$ and $\xi$ is a contact structure with $\omega|_\xi > 0$. Then $\xi$ is tight and has zero Giroux torsion.
\end{prop}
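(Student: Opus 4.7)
The plan is to construct a symplectic cobordism with the stable Hamiltonian structure $(\omega, \lambda)$ at its concave end and $\xi$ at its convex end, and then invoke standard holomorphic-curve obstructions to overtwistedness and Giroux torsion in the positive end to contradict hypertightness at the negative end.

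First observe that $R \in \ker \omega$ together with $\omega|_\xi > 0$ forces $R$ to be transverse to $\xi$; the orientation convention built into $\omega|_\xi > 0$ then gives $\omega \wedge \alpha > 0$ for any contact form $\alpha$ defining $\xi$. For the cobordism I would take $[-1, 1] \times M$ with
\[\Omega = \omega + d\bigl(h(s)\lambda + k(s)\alpha\bigr),\]
where $h$ decreases from a small value $\eps$ in the admissible interval of $(\omega,\lambda)$ down to $0$ over $[-1, 0]$ and then stays $0$, while $k$ sits at $0$ over $[-1, 0]$ and then increases to a large value over $[0, 1]$ (with a slight overlap near $s=0$ so that $h'$ and $k'$ are never both zero). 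A direct computation gives
\[\Omega^2 = 2\,ds \wedge\bigl[h'(\omega\wedge\lambda + h\,\lambda\wedge d\lambda) + k'(\omega\wedge\alpha + k\,\alpha\wedge d\alpha) + h'k\,\lambda\wedge d\alpha + k'h\,\alpha\wedge d\lambda\bigr].\]
The first two summands are positive 3-forms whenever $h'$ or $k'$ is nonzero (by the admissible-interval condition on $h$ and $\omega \wedge \alpha > 0$). With the overlap small and the transition values of $h$ and $k$ both near zero, the cross terms are dominated by the first two. The resulting form $\Omega$ is symplectic, realizing $(\omega, \lambda)$ as a concave SHS end at $s=-1$ and a convex contact-type end for $\xi$ at $s=1$.

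Since $M$ is a rational homology 3-sphere, $\omega$ is exact, and in particular $\int_T \omega = 0$ for every embedded torus $T\subset M$. If $\xi$ were overtwisted, an overtwisted disk at the positive end would seed a Bishop family of holomorphic disks in the cobordism which by SFT compactness degenerates to a broken configuration containing a finite-energy holomorphic plane in the negative symplectization of $(\omega, \lambda)$ (this is Hofer's original overtwisted-disk argument, extended to weak symplectic cobordisms by Niederkr\"uger and Albers--Hofer). Since a plane is topologically a disk, its negative asymptote is a Reeb orbit of $(\omega,\lambda)$ contractible in $M$, contradicting hypertightness. Similarly, if $\xi$ has positive Giroux torsion, the analogous obstruction of Gay, of Ghiggini--Honda--Van Horn-Morris, and of Massot--Niederkr\"uger--Wendl applies---the hypothesis $\int_T \omega = 0$ on the torsion torus is automatic here---producing a Bishop family of holomorphic annuli at the torsion torus whose SFT compactification again yields a plane with contractible negative asymptote in $(\omega, \lambda)$, once more contradicting hypertightness.

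The main technical obstacle is verifying $\Omega^2 > 0$ throughout the cobordism: the cross-terms $h'k\,\lambda\wedge d\alpha$ and $k'h\,\alpha\wedge d\lambda$ have no a priori sign and must be dominated by the two leading positive terms. As outlined, this is accomplished by arranging the supports of $h$ and $k$ to overlap only on a thin interval where both $h$ and $k$ are small. An alternative is to stack two short symplectic cobordisms using \cref{fact:shorthomotopies}: a trivial SHS cobordism of $(\omega, \lambda)$, followed by a cobordism of the form $\omega + d(k(s)\alpha)$ with contact-type top.
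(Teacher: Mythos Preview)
Your overall strategy matches the paper's: build a symplectic cobordism with $(\omega,\lambda)$ at the negative end and $\xi$ at the positive end, then run a Bishop-family argument to produce a contractible Reeb orbit of the SHS from an overtwisted disk or a Giroux torsion domain. However, your cobordism has a sign error. With $h$ decreasing from $\eps$ to $0$ and $k\equiv 0$ on $[-1,0]$, your own formula gives $\Omega^2 = 2h'\,ds\wedge(\omega\wedge\lambda + h\,\lambda\wedge d\lambda)$ there; since the parenthesised $3$-form is positive for $h$ in the admissible interval, positivity requires $h'>0$. As written, $\Omega$ fails to be symplectic on $[-1,0]$, so your assertion that ``the first two summands are positive $3$-forms'' directly contradicts your choice of $h$. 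The fix is trivial---let $h$ increase from $0$ at $s=-1$---but note that your claim of a contact-type top at $s=1$ is also not justified by the construction alone: $\Omega|_{s=1}=\omega + k(1)\,d\alpha$ is only a weak convex boundary for $\xi$ unless you invoke exactness of $\omega$ and Eliashberg's weak-to-strong trick.

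The paper's construction is simpler and sidesteps both issues: it takes $\Omega=\omega+d(t\lambda)$ on $[0,\eps]\times M$, attaches a cylindrical end at $t=0$, and observes that $\{t=\eps\}$ is already a \emph{weak} convex boundary for $\xi$ since $(\omega+\eps\,d\lambda)|_\xi>0$ for small $\eps$. The form $\alpha$ never enters the cobordism, and weak convexity is enough for the Bishop-disk argument. For Giroux torsion the paper then uses $H^2(M;\R)=0$ and Eliashberg's trick to upgrade to a strong boundary, stacks Wendl's strong cobordism to an overtwisted contact structure, and reruns the Bishop-disk argument; your direct appeal to the Bishop-annuli obstruction under the hypothesis $\int_T\omega=0$ is a legitimate alternative route. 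Finally, you should note (as the paper does) that sphere bubbling in the Bishop family is excluded by irreducibility of $M$.
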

\begin{proof}
    Consider the symplectic manifold $[0,\varepsilon] \times M$ with the symplectic form $\Omega = \omega + d(t\lambda)$. This is a symplectic cobordism from $(\omega,\lambda)$ to $\xi$ which is strong on the end $(\omega, \lambda)$ and weak on the $\xi$ end. Attach a cylindrical end onto $0 \times M$. Choose an asymptotically cylindrical almost-complex structure compatible with $\Omega$. Now we can imitate Gromov and Eliashberg's argument that a weakly fillable contact structure is tight \cite{gromov.PseudoHolomorphicCurves,eliashberg.FillingHolomorphicDiscs}. If $\xi$ has an overtwisted disk $D$, then we can choose $J$ with a standard form near $\varepsilon \times D$ such that there is a 1-parameter family of holomorphic disks with boundary on $D$. This family is usually called the Bishop family. The family has one endpoint at the centre of $D$. By SFT compactness, it must have another endpoint, but we will rule out all the possibilities for this other endpoint. A sequence of holomorphic disks cannot exit $\varepsilon \times M$ since that end is pseudo-convex. A sequence of holomorphic disks cannot break at the cylindrical end attached to $0\times M$, since the Reeb orbits along which it breaks must be contractible, but $(\omega,\lambda)$ has no contractible Reeb orbits. Bubbling is prohibited since $M$ is irreducible. Contradiction. Thus, $\xi$ must be tight.

    Now suppose that $\xi$ has nonzero Giroux torsion. Since $M$ is a rational homology 3-sphere, $\omega + \varepsilon d\lambda$ is exact on $\varepsilon \times M$. Eliashberg showed that a weak exact filling may be modified in a collar neighbourhood of the boundary so that it becomes a strong filling \cite[Proposition 13.19]{wendl.LecturesSymplecticField}, \cite[Prop 3.1]{eliashberg.SymplecticManifoldsContact}. Thus, we have a strong symplectic cobordism $X_1$ from $(\omega,\lambda)$ to $\xi$. If $\xi$ has Giroux torsion, then \cite[Theorem 1]{wendl.NonexactSymplecticCobordisms} states that there is a strong symplectic cobordism $X_2$ from $\xi$ to an overtwisted contact structure. The composition of $X_1$ and $X_2$ is a strong symplectic cobordism from $(\omega,\lambda)$ to an overtwisted contact structure. We get a contradiction by applying the argument from the previous paragraph.
\end{proof}

\begin{proof}[Proof of \cref{prop:tight}]
    Suppose $\phi$ has a positive Birkhoff section. Let $\mathcal B$ be the associated open book and let $\alpha$ be a contact form for the Thurston--Winkelnkemper contact structure. Choose small enough blowup parameters that the Reeb flow of $(\omega,\lambda)$ is transverse to $\ker(\alpha)$ and supported by $\mathcal B$. By \cref{prop:transversecontact}, $\ker(\alpha)$ is tight and has zero Giroux torsion. While the Reeb flow of $(\omega,\lambda)$ may not be the Reeb flow of a contact structure, it is compatible with $\mathcal B$. Now $(\omega,\lambda)$ and $(d\alpha, \alpha)$ are supported by the same open book. Therefore, by \cref{prop:openbook}, they are possibly-non-exact stable homotopic.
\end{proof}

\end{section}

\begin{section}{Dehn surgery}
    In this section, we prove \cref{thm:surgery}. Recall the set $$V=\{p/q \mid \gcd(p,q)=1,\, p\neq 0,\,q \geq 6,\text{ and } \,p \neq 0, \pm 1, \pm 2 \mmod q\}$$ from the statement of \cref{thm:surgery}. The significance of this set is explained by the first lemma:

    \begin{lem}\label{lem:hyperbolicsurgery}
        Suppose $p/q\in V$ and $K$ is a hyperbolic knot in $S^3$. Then $S^3_{p/q}(K)$ admits a pseudo-Anosov flow $\phi$ such that the Dehn surgery core is a singular orbit. Moreover, $\phi$ has either a positive or negative Birkhoff section.
    \end{lem}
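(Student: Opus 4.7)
The plan is to apply Fried--Goodman Dehn surgery to a pseudo-Anosov flow on the knot exterior $S^3\setminus K$. Since $K$ is hyperbolic, one may construct a pseudo-Anosov flow $\Phi_0$ on $S^3\setminus K$, either as the suspension of the pseudo-Anosov monodromy (in the fibered case) or via the Gabai--Mosher construction applied to a minimal genus Seifert surface (in general). On the peripheral torus $T=\partial N(K)$, the flow $\Phi_0$ has a well-defined degeneracy slope $\delta$: the common asymptotic slope of flow lines as they wrap into the cusp. A key input is that, after possibly blowing up the dynamics near singular orbits and the cusp as in \cref{subsec:blowup}, one can arrange that $\delta$ is an integer slope in the standard meridian--longitude basis, so $\delta$ is represented by a curve $n\mu+\lambda$ for some $n\in\Z$ depending only on $K$.

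I would then invoke Fried--Goodman surgery: for any filling slope $\alpha$ on $T$ with intersection number $i(\alpha,\delta)\geq 1$, the flow $\Phi_0$ extends across the Dehn filling to a pseudo-Anosov flow $\phi$ on $S^3_\alpha(K)$, and the Dehn surgery core $K^*$ appears as a closed orbit of $\phi$ with prong valence equal to $i(\alpha,\delta)$. Taking $\alpha=p\mu+q\lambda$ and $\delta=n\mu+\lambda$ gives $i(\alpha,\delta)=|p-nq|$. The arithmetic condition $p\not\equiv 0,\pm 1,\pm 2\pmod q$ in the definition of $V$ forces $|p-nq|\geq 3$ uniformly for \textbf{every} integer $n$; so regardless of which integer $n$ is attached to the particular knot $K$, the surgery core $K^*$ becomes a $k$-prong pseudo-hyperbolic singular orbit of $\phi$ with $k\geq 3$.

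For the Birkhoff section, take the Seifert surface (or fiber, in the fibered case) $\Sigma$ of $K$. Before surgery, $\Sigma$ is positively transverse to $\Phi_0$ on its interior with $\partial\Sigma$ tracing the longitude $\lambda$ of $T$. In $S^3_{p/q}(K)$, writing $\lambda$ in terms of a new meridian $\mu^*=p\mu+q\lambda$ and dual longitude $\lambda^*=r\mu+s\lambda$ with $ps-qr=1$ yields $\lambda=-r\mu^*+p\lambda^*$, so $\partial\Sigma$ is homologous in $N(K^*)$ to $p$ copies of $K^*$. Thus $\Sigma$ becomes a Birkhoff section of $\phi$ whose boundary consists of $|p|$ signed copies of the closed orbit $K^*$, oriented with a sign dictated by $\operatorname{sign}(p)$. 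Depending on whether this sign agrees or disagrees with the flow orientation on $K^*$, we obtain either a positive or a negative Birkhoff section.

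The hard part is Step~2: verifying that $\delta$ can be arranged to be an integer slope in the Seifert framing, which requires careful control over the cusp dynamics of $\Phi_0$ after the blowup procedure (so that the stable/unstable foliations meet $T$ in a foliation of integer slope, with the fractional Dehn twist contribution absorbed by the framing). Once that is in place, the defining condition of $V$ is exactly tuned to give the uniform intersection bound $|p-nq|\geq 3$ with no knowledge of the specific integer $n$; the lower bound $q\geq 6$ is what makes $V$ nonempty once coprimality is imposed.
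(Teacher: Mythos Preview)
Your overall strategy --- build the suspension pseudo-Anosov flow on $S^3\setminus K$, then do Fried--Goodman filling and check that the intersection number with the degeneracy slope is at least $3$ --- is exactly the paper's approach. The Birkhoff section argument via the fiber surface is also fine and agrees with the paper.

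The genuine gap is your Step~2. The degeneracy slope $\delta$ is an intrinsic invariant of the pseudo-Anosov flow on the cusped manifold; it is not something you can ``arrange'' by the blowups of \cref{subsec:blowup}, which are interior modifications that do not touch the cusp and do not alter how the stable/unstable prongs meet $\partial N(K)$. The correct reason $\delta$ is so constrained is the one the paper gives: if $\iota(\mu,\delta)\geq 2$, then Fried surgery along the meridian $\mu$ would produce a pseudo-Anosov flow on $S^3$, which is impossible. Hence $\iota(\mu,\delta)\leq 1$, forcing $\delta=1/0$ or $\delta=n/1$ for some $n\in\Z$. You have also omitted the first of these two possibilities. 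When $\delta=1/0$ one gets $\iota(\delta,p/q)=|q|$, and this is precisely why the definition of $V$ includes the requirement $q\geq 6$ in addition to $p\not\equiv 0,\pm 1,\pm 2\pmod q$; your write-up uses only the congruence condition. Once you replace your ``arrange $\delta$ via blowup'' step with this obstruction argument and add the $\delta=1/0$ case, the proof is complete and coincides with the paper's.
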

    \begin{proof}

        Since $K$ is hyperbolic, the fibration of $S^3\setminus K$ admits a transverse pseudo-Anosov flow $\phi^\circ$. The \emph{degeneracy slope}, which we call $r/s$, is the slope of the prongs of $\phi^\circ$ on the cusp of $S^3 \setminus K$. Let $\iota(x,y)$ denote the intersection number between two slopes on $T^2$. If $\iota(r/s,p/q)\geq 2$, then $\phi^\circ$ blows down to a pseudo-Anosov flow $\phi$ on $S^3_{p/q}(K)$. Moreover, the Dehn surgery core is a closed orbit with $\iota(r/s,p/q)$ prongs. Since $S^3$ does not admit a pseudo-Anosov flow, we know that the meridian intersects the degeneracy slope zero or one times. It follows that $r/s$ is either $1/0$ or $r/1$. In our two cases, we have
        \begin{align*}
            \iota(1/0, p/q) &= |q|\\
            \iota(r/1, p/q) &= |p-rq|
        \end{align*}
        We defined $V$ so that both $|q|$ and $|p-rq|$ are at least 3 whenever $p/q \in V$. Therefore, the Dehn surgery core is a singular orbit of $\phi$. The fibers of the fibration of $S^3\setminus K$ descend to either positive or negative Birkhoff sections of $\phi$.

        
    \end{proof}

    \begin{proof}[Proof of \cref{thm:surgery}]
        Let $M$ be a closed, oriented manifold. If $M$ is obtained as $p/q$ surgery along a fibered hyperbolic knot $K$ for some slope $p/q \in V$, then \cref{lem:hyperbolicsurgery} tells us that the Dehn surgery core in $S^3_{p/q}(K)$ is a singular orbit of a transitive pseudo-Anosov flow $\phi$ on $M$. Moreover, $\phi$ has either a positive or a negative Birkhoff section. \Cref{thm:contact_finiteness} tells us that there are finitely many such pseudo-Anosov flows on $M$. Each of those pseudo-Anosov flows has finitely many singular orbits. Thus, there are only finitely many candidates for the Dehn surgery core. By the Gordon--Luecke theorem, for each of those singular orbits, there is at most one surgery coefficient which gives us $S^3$. Therefore, there are finitely many possibilities for $K$.
    \end{proof}



\end{section}

\begin{section}{Questions}

    \begin{ques}Is there a Nielsen--Thurston classification for stable Hamiltonian structures? As a first step, is there a hypertight SHS on a hyperbolic 3-manifold without a pseudo-Anosov representative up to possibly-non-exact stable homotopy?
    \end{ques}
    \begin{ques}\label{ques:ET}
        Given a taut foliation $\F$ on a closed, oriented 3-manifold not homeomorphic to $S^1\times S^2$, the Eliashberg--Thurston theorem produces a positive and negative contact structure $\xi_+$ and $\xi_-$ approximating $T\F$. When do these two contact structures lie in the same stable Hamiltonian class? When do they admit pseudo-Anosov representatives?
    \end{ques}
When $M$ is a rational homology sphere and $(\omega,\lambda)$ is an SHS with Reeb flow transverse to $\F$, there are always strong symplectic cobordisms from $\xi_-$ to $(\omega,\lambda)$ and from $(\omega,\lambda)$ to $\xi_+$. The main question is whether there are symplectic cobordisms in the other direction.
    \begin{ques}
        Does \cref{thm:main} hold when the condition of exactness is dropped? See \cref{rmk:nonexact}.
    \end{ques}
    \begin{ques}
        If a transitive pseudo-Anosov flow $\phi$ has a positive Birkhoff section, then $(\omega_\phi,\lambda_\phi)$ is possibly-non-exact stable homotopic to a positive contact structure. Is the converse true?
    \end{ques}

\end{section}



\end{subsection}
\end{section}

\printbibliography
\end{document}